\newtheorem{theorem}{Theorem}
\newtheorem{lemma}[theorem]{Lemma}
\newtheorem{Ex}[theorem]{Example}
\newtheorem{Rem}[theorem]{Remark}
\newcommand{\N}{\mathbb{N}}
\newcommand{\Proj}{\mathbb{P}}
\newcommand{\CC}{\mathbb{C}}
\newcommand{\Ca}{{\cal C}_a^r}
\newcommand{\epsq}{{\varepsilon (Q)}}
\newcommand{\ve}{\varepsilon}
\newcommand{\etaq}{{\eta (Q)}}
\newcommand{\cP}{{\cal P}}
\newcommand{\cS}{{\cal S}}
\newcommand{\cR}{{\cal R}}
\newcommand{\cL}{{\cal L}}
\newcommand{\orbq}{{\cal O}(Q)}
\newcommand{\cloq}{\overline{\orbq}}
\newcommand{\ok}{{\cal O}({\cal K}_a)}
\newcommand{\clok}{\overline{\ok}}
\newcommand{\la}{\lambda}
\newcommand{\nr}{\mbox{\rm nrank}\,}
\newcounter{algo}[section]
\renewcommand{\leq}{\leqslant}
\renewcommand{\geq}{\geqslant}
\renewcommand{\theta}{\vartheta}
\DeclareMathOperator{\diag}{diag}
\def\BC{\mathbb C}\def\BP{\mathbb P}
\def\ra{{\mathord{\;\rightarrow\;}}}
\def\hd{, \hdots ,}\def\ep{\epsilon}
\def\ot{{\mathord{ \otimes } }}
\def\tdim{\operatorname{dim}}
\def\inv{{}^{-1}}
\begin{document}

\begin{frontmatter}

\title{An explicit description of the irreducible components of the set of matrix pencils with bounded normal rank\tnoteref{proj}}

\author[fernando]{Fernando De Ter\'an\corref{cor}}
\author[froilan]{Froil\'an M. Dopico}
\author[joseph]{J. M. Landsberg}
\address[fernando]{Departamento de Matem\'aticas, Universidad Carlos III de Madrid, Avda. Universidad 30, 28911 Legan\'es,
Spain. {\tt fteran@math.uc3m.es}.}
\address[froilan]{Departamento de Matem\'aticas, Universidad Carlos III de Madrid, Avda. Universidad 30, 28911 Legan\'es,
Spain. {\tt dopico@math.uc3m.es}.}
\address[joseph]{Department of Mathematics, Texas A\& M University, Mailstop 3368 College Station, TX 77843-3368. {\tt jml@math.tamu.edu}.}
\cortext[cor]{corresponding author.}
\tnotetext[proj]{This work was partially
supported by the Ministerio de Econom\'{i}a y Competitividad of Spain
through grants MTM-2012-32542, MTM2015-68805-REDT, and MTM2015-65798-P  (F. De Ter\'an, F. M. Dopico), and  NSF
grant DMS-1405348 (J. M. Landsberg).}

\begin{abstract} The set of $m\times n$ singular matrix pencils with normal rank at most $r$ is an algebraic set with $r+1$ irreducible components. These components are the closure of the orbits (under strict equivalence) of $r+1$ matrix pencils which are in Kronecker canonical form. In this paper, we provide a new explicit description of each of these irreducible components which is a parametrization of each component. Therefore one can explicitly construct any pencil in each of these components. The new description of each of these irreducible components consists of the sum of $r$ rank-$1$ matrix pencils, namely, a column polynomial vector of degree at most $1$ times a row polynomial vector of degree at most $1$, where we impose one of these two vectors to have degree zero. The number of row vectors with zero degree determines each irreducible component.
\end{abstract}

\begin{keyword} matrix pencil, normal rank, algebraic set, irreducible components, orbits, Kronecker canonical form.

\medskip
{\em AMS classification}: 15A21, 15A22,
\end{keyword}

\end{frontmatter}

\section{Introduction}\label{sec:intro}

We are concerned in this paper with {\em singular} matrix pencils $A+\la B$, with $A,B\in\CC^{m\times n}$. This includes rectangular
pencils ($m\neq n$) and square ones ($m=n$) with $\det(A+\la B)$ identically zero as a polynomial in $\la$. More precisely,
our interest focuses on the set  $\cP_r^{m\times n}$  of $m\times n$ matrix pencils with complex coefficients and normal rank at
most $r$, with $r\leq\min\{m,n\}$ if $m\neq n$ and $r\leq n-1$ if $m=n$.

In the contexts where matrix pencils usually arise, e.g., systems of first order ordinary differential equations with constant coefficients $Ax+Bx'=f(t)$,
the relevant information is encoded in the {\em Kronecker canonical form} of the pencil (in the
following, KCF, or KCF$(A+\la B)$ when it refers to a particular pencil). This is the canonical form under strict equivalence of
matrix pencils (see Section \ref{sec:def}). The computation of the KCF of a given pencil $A+\la B$ is a delicate task, because it is
not a continuous function of the entries of $A$ and $B$ (see, e.g., \cite{dehoyos}). Nonetheless, when a good algorithm (for instance, the backward stable one in \cite{vd79}) is
used to compute the KCF, the output is the KCF of a pencil $\widetilde A+\la \widetilde B$, \lq\lq nearby\rq\rq\ to the exact one, more precisely, a KCF that contains the exact KCF in its orbit closure, as explained in the next paragraph. In this setting, the analysis of the geometry of the set of $m\times n$ matrix pencils may be useful \cite{eek1,eek2}. In particular,
the knowledge of all KCFs of the pencils included in the orbit closure of a given KCF
could improve our understanding of possible failures of the algorithms, and to develop enhanced
versions of these algorithms.

Two $m\times n$ matrix pencils $Q_1(\la)$ and $Q_2(\la)$ are said to be {\em strictly equivalent} if there exist two constant nonsingular matrices $E\in\CC^{m\times m}$ and $F\in\CC^{n\times n}$ such that $EQ_1(\la)F=Q_2(\la)$. We identify each orbit under strict equivalence with the KCF of any pencil in this orbit (by definition, they all have the same KCF). Then we say that some KCF, $ K_1+\la K_2$, degenerates to the KCF  $\widetilde K_1+\la \widetilde K_2$
if $\widetilde K_1+\la \widetilde K_2$ belongs to the closure of the orbit of $K_1+\la K_2$. In other words, if there is a sequence of matrix pencils, $A_m+\la B_m$, all having the same KCF, namely $K_1+\la K_2$, which converges to a pencil whose KCF is $\widetilde K_1+\la \widetilde K_2$. There are some cases where it is easy to determine, even at a first glance, whether a given KCF degenerates to some other one or not. This happens, for instance, with the following two pencils in KCF:
$$
K(\la)=\left(\begin{array}{cc}\la&1\\0&0\end{array}\right)\qquad\mbox{and}\qquad
\widetilde K(\la)=\left(\begin{array}{cc}\la&0\\0&0\end{array}\right).
$$
It holds that $K(\la)$ degenerates to $\widetilde K(\la)$, since the sequence $\left\{K^{(m)}(\la)\right\}_{m\in\N}\,$, with
$$
K^{(m)}(\la)=\left(\begin{array}{cc}\la&1/m\\0&0\end{array}\right),
$$
consists of pencils which are strictly equivalent to $K(\la)$ and it converges to $\widetilde K(\la)$. Note that both $K(\la)$ and $\widetilde K(\la)$ have the same normal rank, namely $1$ (we refer the reader to Section \ref{sec:def} for all notions we are using along the Introduction). However, it is not easy, in general, to know whether a given KCF degenerates to some other KCF or not.
(Although there are simple necessary conditions, e.g., the normal rank  of the first must be at least
the normal rank of the second.) Consider the following two pencils in KCF:
\begin{equation}\label{kktilde}
K(\la)=\left(\begin{array}{ccc|ccc}\la&1&&&\\&\la&1&&\\\hline&&&\la&&\\&&&1&\la&\\&&&&1&\la\\&&&&&1\end{array}\right),\quad\mbox{and}\quad
\widetilde K(\la)=\left(\begin{array}{cc|cccc}\la&1&&&\\\hline&&\la&1&\\&&&\la&1&\\&&&&\la&1\\\hline&&&&0&0\\&&&&0&0\end{array}\right).
\end{equation}
It is clear by the normal ranks that $\widetilde K(\la)$ cannot degenerate to $K(\la)$, but the question as to
whether or not  $K(\la)$ can degenerate to $\widetilde K(\la)$ is more subtle. This can be determined as explained in the following paragraph.


Necessary and sufficient conditions for the inclusion of orbit closures of any two given KCFs have been known since the 1990's \cite{bongartz,dehoyos,pokr}. These conditions enable one to determine, for example,
that $K(\la)$ in \eqref{kktilde} degenerates to $\widetilde K(\la)$ (see \cite[Th. 3.1]{eek2}).
Moreover the partial containment order of orbit closures of $m\times n$ matrix pencils is also known \cite{eek2}, and software tools are also available to get the complete Hasse diagram of the inclusion relation between orbit closures of $m\times n$ matrix pencils \cite{stratigraph}. The stratification of structured KCFs of structured matrix pencils or, more in general, of canonical eigenstructures of structured matrix polynomials, is currently an active area of research where many problems remain open \cite{dk14,dk-preprint}. In the characterization of the inclusion relation between orbit closures, the normal rank of the pencils plays a prominent role (see \cite[Th. 3.1]{eek2}), so it makes sense to have a closer look at the set of matrix pencils with bounded normal rank.

The set of $m\times n$ singular matrix pencils is an algebraic set, so it is natural to analyze it from the point of view of algebraic geometry. The approach to the description of subsets of matrix pencils using algebraic geometry can be traced back to the 1980's with the work by Waterhouse \cite{waterhouse}, who identified the irreducible components of $\cP^{n\times n}_{n-1}$. More recently, the $r+1$ irreducible components of $\cP_{r}^{m\times n}$ have been described in \cite{dd-orbits}. These components are given as the  orbit closures
of certain KCFs, which are termed the \lq\lq generic\rq\rq\  KCFs of $m\times n$ matrix pencils with normal rank at most $r$. This name emphasizes the fact that any $m\times n$ KCF with normal rank at most $r$ is in the closure of at least one orbit among the $r+1$ orbits corresponding to the generic KCFs. This description is motivated by possible numerical applications, since it deals with nearby canonical structures. However, given a matrix pencil which is not in KCF, it is not easy, in general, to determine whether or not it belongs to a certain component using this description. Even if the pencil is given in KCF, to determine whether the pencil belongs to some irreducible component requires one to check certain majorization conditions \cite[Th. 3.1]{eek2}.

Recently, a new description of $\cP_r^{m\times n}$ was presented in \cite{dd-addendum} as the union of $r+1$ subsets, in order to solve open low-rank perturbation problems \cite{ddm-weier}. The germ of this description was already present in \cite{dd-kron} for pencils with normal rank exactly $r$, but it was not used again until \cite{dd-addendum}. It seems natural to ask whether these $r+1$ subsets are related with the $r+1$ irreducible components provided in \cite{dd-orbits}.

In this paper we prove that the subsets mentioned in the preceeding paragraph coincide with the $r+1$ irreducible components of $\cP_r^{m\times n}$. This provides a description of the irreducible components of $\cP_r^{m\times n}$ that makes no use of the KCF. The description is given in terms of a decomposition of an $m\times n$ matrix pencil with normal rank at most $r$ as a sum of $r$ pencils $u(\la)v(\la)^T$ with rank at most $1$ and having an specific  $0/1$ degree pattern of the columns $u(\la)$ and rows $v(\la)^T$ of each summand $u(\la)v(\la)^T$. The new description is based on the decomposition of $\cP_r^{m\times n}$ in \cite{dd-addendum}, as the union of $r+1$ different subsets that correspond to each of these $0/1$ degree patterns. We show that each set in this decomposition coincides with exactly one orbit closure in the previous description of $\cP_r^{m\times n}$. We provide two different proofs of this fact. The first one makes use of tools and techniques from linear algebra and
matrix analysis, whereas the second one follows an algebraic geometry approach.

The paper is organized as follows. In Section \ref{sec:def} we introduce the basic notions, tools, and notation used throughout the paper, we recall the previous results on the description of $\cP_r^{m\times n}$ mentioned above and, finally, we state our main result (Theorem \ref{thm:main}). In
Section \ref{sec:main} we present the first  proof of Theorem \ref{thm:main}, based on a linear algebra approach,
together with several auxiliary technical results.
Section \ref{sec:ag} is devoted to the second proof of Theorem  \ref{thm:main}, that uses tools from algebraic geometry.
Although the second proof is considerably shorter, it requires familiarity with basic concepts of algebraic geometry. By contrast, the first one can be followed by anyone with an elementary background in matrix pencils. Finally, in Section \ref{sec:conclusion} we summarize the contributions of the paper.


\section{Notation, definitions, previous results, and statement of the main result}\label{sec:def}

Throughout the paper, $I_k$ denotes the $k\times k$ identity matrix. By $\CC(\la)$ and $\CC[\la]$ we denote, respectively, the field of rational functions and the ring of polynomials in the variable $\la$ with complex coefficients. We also denote by $\CC[\la]^n$ the set of column vectors with $n$ coordinates in $\CC[\la]$. Vectors in $\CC[\la]^n$ are termed {\em vector polynomials}. Analogously, $\CC[\la]^{m\times n}$ and $\CC(\la)^{m\times n}$ denote, respectively, the set of $m\times n$ matrix polynomials and the set of $m\times n$ rational matrices. The {\em degree} of a vector polynomial $v$, denoted by $\deg v$, is the maximum degree of its components. Instead of $A+\la B$ we will use, in general, the shorter notation $Q(\la)$ for a matrix pencil.

The {\em normal rank} of a matrix pencil $Q(\la)$, denoted by $\nr Q$, is the rank of $Q(\la)$ considered as a matrix over $\CC(\la)$. In other words, $\nr Q$ is the size of the largest non-identically zero minor of $Q(\la)$ \cite{eek2} (see also \cite[Ch. XII, \S3]{gant}, where the name {\em rank} is used instead). For brevity, a matrix pencils with normal rank at most $1$ will be termed a {\em rank-$1$ pencil}.

 Given a matrix pencil $Q(\la)$, the {\em orbit under strict equivalence of $Q(\la)$}, denoted by $\orbq$, is the set of matrix pencils which are strictly equivalent to $Q(\la)$. By $\cloq$ we denote the closure of $\orbq$ in the standard topology of $\CC^{2mn}$, after identifying $\CC^{2mn}$ with the set of $m\times n$ matrix pencils with complex entries. It is known \cite{hinrichsen} that this coincides with the closure of $\orbq$ in the Zariski topology in $\CC^{2mn}$. This result is a very special case of a classical result in algebraic geometry \cite[Thm 2.33 p. 38]{mumford} that the Zariski and classical closures of a Zariski open subset of an irreducible projective variety coincide.

Let us recall, for the sake of completeness, the KCF of a matrix pencil $Q(\la)$ \cite[Ch. XII]{gant}.

\begin{theorem}\label{thm:kcf} {\bf(Kronecker canonical form)}
Each complex matrix pencil $Q(\la)$ is strictly equivalent to a direct sum of blocks of the following types:

\begin{itemize}

\item[{\bf(1)}]{\bf Right singular blocks} (of {\rm order} $\ve$): \[L_{\ve}={\left(\begin{array}{ccccc}\la & 1 &  &  &    \\ & \la & 1 &  &   \\ &  & \ddots & \ddots &    \\ &  &  & \la & 1 \end{array}\right)}_{\ve\times(\ve+1)}\,.\]

\item[{\bf(2)}]  {\bf Left singular blocks} (of {\rm order} $\eta$): $L_{\eta}^{ T},$ where $L_{\eta}$ is a right singular block.

\item[{\bf(3)}] {\bf Finite blocks:} $J_{ k}(\mu)+\la I_k$, where $J_{ k}(\mu)$ is a Jordan block of size $k\times k$ associated with $\mu\in\CC$, that is,
\[J_{ k}(\mu)=\left(\begin{array}{ccccc}\mu & 1 &   &   &   \\  & \mu & 1 &   &   \\  &   & \ddots & \ddots &   \\  &   &   & \mu & 1 \\  &   &   &   & \mu \end{array}\right)_{k\times k}\,.\]

\item[{\bf(4)}] {\bf Infinite blocks:} $N_{u}=I_{ u}+\la J_{ u}(0)$.
\end{itemize}

\noindent This direct sum of blocks is uniquely determined, up to permutation of blocks, and is known as the {\rm Kronecker canonical form} of $Q(\la)$.
\end{theorem}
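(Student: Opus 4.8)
Since Theorem~\ref{thm:kcf} is the classical Kronecker canonical form, stated here only for completeness, the plan is to follow the standard reduction argument along the lines of \cite[Ch.~XII]{gant}, splitting the proof into an existence part and a uniqueness part. For existence, the first and central step is a \emph{reduction lemma}: if $Q(\la)=A+\la B$ has a nonzero polynomial vector $x(\la)$ in its right kernel over $\CC(\la)$, pick one of least degree $\ve$; then $Q(\la)$ is strictly equivalent to $L_{\ve}\oplus Q_1(\la)$ for a pencil $Q_1$ of strictly smaller size, and moreover $Q_1$ has no right null vector of degree $<\ve$. Granting this, one iterates to peel off all right singular blocks $L_{\ve_1},\dots,L_{\ve_p}$ with $\ve_1\leq\cdots\leq\ve_p$; transposing and repeating the same procedure peels off the left singular blocks $L_{\eta_1}^T,\dots,L_{\eta_q}^T$. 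What remains is a \emph{regular} pencil $\wh A+\la\wh B$, i.e. a square pencil with $\det(\wh A+\la\wh B)\not\equiv 0$.

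The second step is the Weierstrass reduction of the regular part. I would choose $\mu_0\in\CC$ with $\wh A+\mu_0\wh B$ invertible, which is possible precisely because $\det(\wh A+\la\wh B)$ is a nonzero polynomial in $\la$, and set $M=(\wh A+\mu_0\wh B)^{-1}\wh B$. The Fitting decomposition $\CC^n=\ker M^n\oplus\operatorname{im}M^n$ then splits the pencil: on $\operatorname{im}M^n$ the operator $M$ is invertible and, after the shift by $\mu_0$, the pencil is equivalent to $\la I+J$, whose Jordan canonical form produces the finite blocks $J_k(\mu)+\la I_k$; on $\ker M^n$ the operator $M$ is nilpotent and, reducing it to Jordan form, one obtains the infinite blocks $N_u=I_u+\la J_u(0)$. (Equivalently: if $\wh B$ is invertible one reduces directly to $\la I+A'$ and applies the Jordan form, and the infinite blocks arise exactly from the nilpotent part obtained by interchanging the roles of $\wh A$ and $\wh B$.)

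For uniqueness up to permutation of blocks, the plan is to exhibit each piece of the canonical data as an invariant of the strict equivalence class and to note that direct sums add these invariants blockwise. The right minimal indices $\ve_i$ are recovered from the dimensions of the right kernels of the expanded block-Toeplitz matrices built from $A$ and $B$ (equivalently, from the ranks of $Q(\la)$ acting on vector polynomials of bounded degree); the left minimal indices $\eta_j$ are recovered the same way from $Q(\la)^T$; the finite elementary divisors come from the Smith form of $A+\la B$ over $\CC[\la]$; and the infinite elementary divisors come from the Smith form of $B+\la A$ near $\la=0$. Since each canonical block in (1)--(4) contributes exactly one such invariant and these invariants are additive over direct sums, the multiset of blocks is uniquely determined.

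The main obstacle is the reduction lemma of the first step: one must verify not only that an $L_\ve$ block can be split off (clearing the resulting off-diagonal block uses the minimality of $\ve$ to guarantee solvability of the relevant linear systems), but also that the leftover pencil $Q_1$ has no right null vector of degree $<\ve$, so that the extracted quantities are genuinely the minimal indices and the peeling process is well defined and terminates correctly. The classical argument for the latter is by contradiction: a hypothetical short null vector of $Q_1$ is lifted back to a right null vector of $Q$ of degree $<\ve$, contradicting the choice of $\ve$. Careful bookkeeping of the change-of-basis matrices throughout the iteration, and the interaction between the singular and regular parts, is the only other nontrivial point.
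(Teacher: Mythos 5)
The paper does not prove this statement: Theorem~\ref{thm:kcf} is the classical Kronecker canonical form, quoted verbatim for completeness with a citation to \cite[Ch.~XII]{gant}, so there is no in-paper argument to compare against. Your sketch is a faithful outline of exactly that classical proof (Kronecker's reduction lemma for the minimal-degree right null vector to peel off $L_\ve$ blocks, transposition for the left blocks, Weierstrass/Fitting splitting of the regular part into finite and infinite Jordan blocks, and uniqueness via minimal indices and elementary divisors as strict-equivalence invariants), so it takes essentially the same route as the source the authors rely on.
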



Note that KCF$(Q)$ may contain singular blocks of the form $L_0$ or $L_0^T$. The first one adds one null column to the KCF and no rows, whereas the second one adds one null row and no columns.

Some known facts about KCF$(Q)$ will be used throughout the paper. We refer the reader to \cite[Ch. XII]{gant} for more information on this topic. In the first place, if $Q(\la)$ is $m\times n$ and $\nr Q=r$, then the number of left and right singular blocks in KCF$(Q)$ is $m-r$ and $n-r$, respectively.

Left (respectively, right) singular blocks in KCF$(Q)$ are associated with vectors in the {\em left} (resp., {\em right}) rational {\em nullspace of $Q(\la)$}
\begin{eqnarray*}
  {\cal N}_{\ell}(Q)\!\! &:=& \!\left\{y(\la)^T \in\CC(\la)^{1 \times m}
                           \,:\,y(\la)^T Q(\la)\equiv0^T\right\},\\
                            (\mbox{resp.\ } {\cal N}_r(Q)\!\! &:=& \!\left\{x(\la)\in\CC(\la)^{n \times 1}
                           \,:\, Q(\la)x(\la)\equiv0\right\}).
\end{eqnarray*}
More precisely, if $\varepsilon_1\leq\cdots\leq\varepsilon_p$ and $\eta_1\leq\cdots\leq\eta_q$ are the orders of the right and left singular blocks in KCF$(Q)$, respectively, then there are bases $\{x_1(\la),\hdots,x_p(\la)\}$ and $\{y_1(\la)^T,\hdots,y_q(\la)^T\}$ of $ {\cal N}_{r}(Q)$ and $ {\cal N}_{\ell}(Q)$, respectively, formed by vector polynomials with $\deg x_i=\varepsilon_i$, for $i=1,\hdots,p$, and $\deg y_j=\eta_j$, for $j=1,\hdots,q$. The numbers $\varepsilon_1\leq\cdots\leq\varepsilon_p$ and $\eta_1\leq\cdots\leq\eta_q$ are known as, respectively, the {\em column} and {\em row minimal indices of $Q(\la)$} or, also, as the {\em right} and {\em left minimal indices of $Q(\la)$}. In these conditions, we denote:
$$
\varepsilon(Q):=\sum_{i=1}^p\varepsilon_i,\qquad\mbox{and}\qquad
\eta(Q):=\sum_{i=1}^q\eta_i\,
$$
for the sum of right and left minimal indices of $Q(\la)$, respectively.

We follow the notation from \cite{dd-orbits,eek1,eek2}. In particular, the notation $r_i(Q)$ and $\ell_i(Q)$ is used, respectively, for the number of right and left singular blocks in KCF$(Q)$ of order at least $i$, for $i=0,1,\hdots$ Then, we define
$$
\begin{array}{c}
\cR(Q):=(r_0(Q),r_1(Q),r_2(Q),\hdots)\\
\cL(Q):=(\ell_0(Q),\ell_1(Q),\ell_2(Q),\hdots).
\end{array}
$$
These lists, together with the list ${\cal J}_{\mu}(Q)$ of Weyr characteristics of $Q(\la)$ for the eigenvalue $\mu$ (see \cite[p. 680]{eek2}), are key in describing the necessary and sufficient conditions for inclusion of orbit closures under strict equivalence of two given matrix pencils. The majorization of lists, $(a_1,a_2,\hdots)\geq(b_1,b_2,\hdots)$, is understood as $\sum_{i=1}^ja_i\geq\sum_{i=1}^jb_i$, for all $j=1,2,\hdots$ (see \cite[p. 671]{eek2}). Also, the sum of the list $(a_1,a_2,\hdots)$ and the number $s$ is the list obtained by adding $s$ to every element in the list, that is, $(a_1,a_2,\hdots)+s:=(a_1+s,a_2+s,\hdots)$.

\begin{theorem}\label{thm:majorization}{\rm (\cite[Th. 3.1]{eek2})} Given two $m\times n$ matrix pencils $P(\la)$ and $Q(\la)$, then $\overline{{\cal O}(Q)}\subseteq\overline{{\cal O}(P)}$ if and only if the following three conditions hold:
\begin{itemize}
\item[{\rm(i)}] $\cR(P)+\nr P\geq\cR(Q)+\nr Q$,
\item[{\rm(ii)}] $\cL(P)+\nr P\geq\cL(Q)+\nr Q$,
\item[{\rm(iii)}] ${\cal J}_{\mu}(P)+r_0(P)\leq{\cal J}_{\mu}(Q)+r_0(Q)$, for any $\mu\in\CC\cup\{\infty\}$,
\end{itemize}
The third inequality is equivalent to ${\cal J}_{\mu}(P)+\ell_0(P)\leq{\cal J}_{\mu}(Q)+\ell_0(Q)$, since for any $m\times n$ matrix pencil $M(\la)$, it holds that $r_0(M)-\ell_0(M)=n-m$.
\end{theorem}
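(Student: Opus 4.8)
The plan is to prove the two implications separately; the final sentence of the statement is immediate, since $r_0(M)=n-\nr M$ and $\ell_0(M)=m-\nr M$ for every $m\times n$ pencil $M(\la)$, so the two forms of (iii) differ only by adding the constant $n-m$ to each inequality.

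For \emph{necessity} the engine is lower semicontinuity of rank. For $M(\la)=A+\la B$ and $k\ge 0$ I would introduce three families of block Toeplitz matrices: $\Sigma_k(M)$, the $(k+2)m\times(k+1)n$ matrix whose kernel consists of the coefficient vectors of the polynomial right nullvectors of $M$ of degree $\le k$; its transpose analogue $\Sigma_k(M^T)$ for left nullvectors; and, for $\mu\in\CC\cup\{\infty\}$, the $km\times kn$ bidiagonal Toeplitz matrix $N_k(\mu,M)$ with $A+\mu B$ on the diagonal and $B$ on the superdiagonal (and the evident reversed version, with $B$ on the diagonal and $A$ on the superdiagonal, for $\mu=\infty$). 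A short count on the Kronecker canonical form of $M$ yields, with $\Sigma_{-1}:=0$,
\[
\sum_{i=0}^{j}\bigl(r_i(M)+\nr M\bigr)=n+\operatorname{rank}\Sigma_{j-1}(M),\qquad
\sum_{i=0}^{j}\bigl(\ell_i(M)+\nr M\bigr)=m+\operatorname{rank}\Sigma_{j-1}(M^T),
\]
\[
\sum_{c=1}^{j}\bigl(w_c(\mu,M)+r_0(M)\bigr)=jn-\operatorname{rank}N_j(\mu,M),
\]
where the $w_c(\mu,M)$ are the entries of ${\cal J}_\mu(M)$; the shift by $\nr M$ in (i) and (ii) and the shift by $r_0(M)$ in (iii) are precisely what converts each cumulative sum into an affine function of a single rank — indeed the last identity exhibits the $+r_0$ of (iii) as the contribution of the right singular blocks to $\ker N_j(\mu,M)$. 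Under strict equivalence each of $\Sigma_k$, $\Sigma_k(\cdot^T)$ and $N_k(\mu,\cdot)$ is pre- and post-multiplied by invertible matrices, so its rank is constant on $\orbq$; since rank is lower semicontinuous, this value cannot be exceeded anywhere on $\overline{{\cal O}(P)}$. Reading off the identities, conditions (i), (ii), (iii) are equivalent to $\operatorname{rank}\Sigma_k(Q)\le\operatorname{rank}\Sigma_k(P)$, $\operatorname{rank}\Sigma_k(Q^T)\le\operatorname{rank}\Sigma_k(P^T)$ and $\operatorname{rank}N_k(\mu,Q)\le\operatorname{rank}N_k(\mu,P)$ for all $k$ and $\mu$, and these hold whenever $Q\in\overline{{\cal O}(P)}$.

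For \emph{sufficiency} the plan is to generate the orbit-closure order by elementary degenerations. First, (i)--(iii) define a partial order on $m\times n$ Kronecker forms — reflexivity and transitivity come from those of list majorization, antisymmetry from the fact that coincidence of $\cR$, $\cL$ and all ${\cal J}_\mu$ pins down the KCF. Second, and this is the combinatorial heart, one shows that if $P$ strictly dominates $Q$ in this order there is an intermediate form $P'\ne P$ with $P$ dominating $P'$ dominating $Q$, where $P'$ is obtained from $P$ by a single move from a short explicit list: shifting one box between two singular blocks on the same side; trading one box of a singular block for a size-$1$ Jordan or infinite block; a Jordan coalescence at a fixed eigenvalue that makes two block sizes more nearly equal; or moving one box between a singular block and the Jordan structure at a fixed finite eigenvalue or at infinity. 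Third, for each such move one writes down a one-parameter family $P(t)$, $t\in(0,1]$, of pencils all strictly equivalent to the dominating structure whose limit as $t\to 0$ is strictly equivalent to the dominated one — the prototype being the $2\times2$ degeneration recalled in the Introduction. Finally, since $\overline{{\cal O}(P)}$ is closed and invariant under strict equivalence, $P'\in\overline{{\cal O}(P)}$ gives $\overline{{\cal O}(P')}\subseteq\overline{{\cal O}(P)}$, so an induction on the codimension of the orbit (a finite quantity that strictly decreases along the chain) yields $Q\in\overline{{\cal O}(P)}$.

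The step I expect to be the main obstacle is the sufficiency, and within it the combinatorial claim that strict domination always admits a one-step refinement together with the explicit families for the ``mixed'' moves that exchange weight between the singular and the regular parts: these require tracking how a deficit recorded by one of (i)--(iii) is created or repaid by a move that simultaneously touches another, which is where the exact shape of the shifted majorization conditions is genuinely used. Necessity, by contrast, is routine once the rank identities are in place. A more structural alternative for sufficiency identifies $m\times n$ pencils with representations of the Kronecker quiver and uses the coincidence of the geometric degeneration order with the combinatorial $\operatorname{Hom}$-order for that tame quiver, after which (i)--(iii) are recovered by evaluating $\dim\operatorname{Hom}$ against the indecomposable pencils; the overall effort is comparable.
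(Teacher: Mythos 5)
First, a point of reference: the paper does not prove this theorem at all --- it is imported verbatim from \cite[Th.~3.1]{eek2} (with the hard direction going back to Pokrzywa, De~Hoyos and Bongartz), so there is no in-paper proof to compare yours against. Judged on its own terms, your \emph{necessity} half is correct and is the standard argument. The rank identities check out: with $\Sigma_{k}(M)$ the $(k+2)m\times(k+1)n$ convolution matrix one has $\operatorname{rank}\Sigma_{j-1}(M)=j\,\nr M+\sum_{i=1}^{j}r_i(M)$, which is exactly your first displayed identity, and the analogous computations for $\Sigma_k(M^T)$ and $N_j(\mu,M)$ go through; strict equivalence conjugates each of these structured matrices by block-diagonal invertible matrices, so lower semicontinuity of rank gives (i)--(iii) on the orbit closure. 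The closing remark about $r_0(M)-\ell_0(M)=n-m$ is also fine.

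The genuine gap is the \emph{sufficiency} direction, which in your write-up is a plan rather than a proof. Everything difficult is concentrated in the two steps you defer: (1) the combinatorial claim that whenever $P$ strictly dominates $Q$ in the order defined by (i)--(iii) there is an intermediate canonical form $P'$ obtained from $P$ by one move from your short list, and that this list of moves is actually complete; and (2) the explicit one-parameter families realizing each move, in particular the ``mixed'' moves trading boxes between singular blocks and Jordan blocks at a fixed eigenvalue, where one must verify that the perturbed pencil has the dominating structure for all $t\neq 0$ and not merely that its limit is the dominated one. These two items are the entire content of the sufficiency proof in the literature (they occupy most of Pokrzywa's paper and of the stratification machinery in \cite{eek2}); without them the induction on orbit codimension has nothing to run on. Your quiver-theoretic alternative via Bongartz is legitimate but likewise only named, not executed. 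So: necessity is done modulo routine verification; sufficiency is correctly strategized but not proved.
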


The set $\cP_r^{m\times n}$ is an algebraic subset of $\CC^{2mn}$, since it is  the whole $\CC^{2mn}$ if $r=\min\{m,n\}$, or it is defined as the common zeros of a set of polynomials in $2mn$ variables if $r\leq\min\{m,n\}-1$. More precisely, these polynomials are all the $(r+1)\times(r+1)$ minors of an arbitrary $m\times n$ matrix pencil. We are interested in describing the irreducible components of $\cP_r^{m\times n}$. There is a known description of these components as the orbit closures of certain KCF's. For the sake of completeness, we reproduce this result here.

\begin{theorem}\label{thm:orbits}{\rm (\cite[Th. 3.5]{dd-orbits})}
Let $r$ be an integer with $1\leq r\leq \min\{m,n\}$ if $m\neq n$ and $1\leq r\leq n-1$ if $m=n$. Then the set $\cP_r^{m\times n}$ is a closed set which has exactly $r+1$ irreducible components in the Zariski topology. These irreducible components are $\overline \ok$, for $a=0,1,\hdots,r$, where
\begin{equation}\label{oka}
{\cal K}_a(\la):=\diag (\underbrace{L_{\alpha+1},\hdots,L_{\alpha+1}}_{s},\underbrace{L_\alpha,\hdots,L_\alpha}_{n-r-s},\underbrace{L_{\beta+1}^T,\hdots,L_{\beta+1}^T}_{t},\underbrace{L_\beta^T,\hdots,L_\beta^T}_{m-r-t}),
\end{equation}
with $a=\alpha(n-r)+s$ and $r-a=\beta(m-r)+t$ being the Euclidean divisions of $a$ and $r-a$ by, respectively, $n-r$ and $m-r$.
\end{theorem}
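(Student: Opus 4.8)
The plan is to use the combinatorial criterion of Theorem~\ref{thm:majorization} to locate the inclusion-maximal orbit closures inside $\cP_r^{m\times n}$ and to show that these are exactly the sets $\overline{{\cal O}({\cal K}_a)}$. (I will assume $m-r\ge1$ and $n-r\ge1$; if $m\ne n$ and $r=\min\{m,n\}$ then $\cP_r^{m\times n}$ is the whole ambient space and the statement degenerates, a case one disposes of at once.) First I would observe that $\cP_r^{m\times n}$ is Zariski closed: it is the common zero locus, in the $2mn$ entries of a generic pencil $A+\la B$, of the finitely many coefficients in $\la$ of all the $(r+1)\times(r+1)$ minors of $A+\la B$. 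Since every pencil lies in the orbit of its KCF and strict equivalence preserves the normal rank, one then gets $\cP_r^{m\times n}=\bigcup_{Q}\overline{{\cal O}(Q)}$, the union being over the finitely many $m\times n$ pencils $Q$ in Kronecker canonical form with $\nr Q\le r$, and each $\overline{{\cal O}(Q)}$ is contained in $\cP_r^{m\times n}$. Moreover ${\cal O}(Q)$ is the image of the irreducible variety of pairs $(E,F)$ of invertible matrices of sizes $m\times m$ and $n\times n$ under the morphism $(E,F)\mapsto EQF$, so ${\cal O}(Q)$ is irreducible and $\overline{{\cal O}(Q)}$ is an irreducible closed subset. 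Hence the irreducible components of $\cP_r^{m\times n}$ are precisely the inclusion-maximal members of the finite family $\{\overline{{\cal O}(Q)}\}$, and everything reduces to identifying these.

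From the definition~\eqref{oka} one reads off that ${\cal K}_a$ has normal rank $r$, has no finite or infinite blocks, and satisfies $r_0({\cal K}_a)=n-r$, $\ell_0({\cal K}_a)=m-r$, and $\varepsilon({\cal K}_a)=a$, $\eta({\cal K}_a)=r-a$; since its right (resp.\ left) minimal indices are as balanced as possible, one also has the exact evaluations $\sum_{i=1}^{k}r_i({\cal K}_a)=\min\{k(n-r),a\}$ and $\sum_{i=1}^{k}\ell_i({\cal K}_a)=\min\{k(m-r),r-a\}$ for all $k\ge1$. On the other hand, counting rows (or columns) in the KCF of an arbitrary $m\times n$ pencil $Q$ with $r':=\nr Q\le r$ yields $\varepsilon(Q)+\eta(Q)\le r'\le r$. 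Given such a $Q$, I would set $a:=\varepsilon(Q)$, which lies in $\{0,\dots,r\}$ and satisfies $r-a\ge\eta(Q)$, and then check the three conditions of Theorem~\ref{thm:majorization} with $P={\cal K}_a$. Condition~(iii) is immediate, because ${\cal J}_\mu({\cal K}_a)$ is the zero list while $r_0({\cal K}_a)=n-r\le n-r'=r_0(Q)$. For conditions~(i) and~(ii) the leading entries coincide ($r_0({\cal K}_a)+r=n=r_0(Q)+r'$ and $\ell_0({\cal K}_a)+r=m=\ell_0(Q)+r'$), and for every further partial sum the required inequality follows at once from the evaluations above together with the crude bounds $\sum_{i=1}^{k}r_i(Q)\le\min\{k(n-r'),\varepsilon(Q)\}$ and $\sum_{i=1}^{k}\ell_i(Q)\le\min\{k(m-r'),\eta(Q)\}$ and the choices $r'\le r$, $a=\varepsilon(Q)\le r-\eta(Q)$. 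Thus $\overline{{\cal O}(Q)}\subseteq\overline{{\cal O}({\cal K}_a)}$.

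It then remains to see that $\overline{{\cal O}({\cal K}_0)},\dots,\overline{{\cal O}({\cal K}_r)}$ are pairwise incomparable, which again follows from Theorem~\ref{thm:majorization}: if $a<b$, the inclusion $\overline{{\cal O}({\cal K}_a)}\subseteq\overline{{\cal O}({\cal K}_b)}$ would force $\cL({\cal K}_b)\ge\cL({\cal K}_a)$, contradicting $\ell_0({\cal K}_a)=\ell_0({\cal K}_b)=m-r$ together with $\sum_{i\ge1}\ell_i({\cal K}_a)=r-a>r-b=\sum_{i\ge1}\ell_i({\cal K}_b)$; the reverse inclusion fails symmetrically by way of condition~(i). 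Combining this with the previous paragraph: every $\overline{{\cal O}(Q)}$ in the finite union describing $\cP_r^{m\times n}$ lies inside some $\overline{{\cal O}({\cal K}_a)}$, and no $\overline{{\cal O}({\cal K}_a)}$ is contained in another; hence the inclusion-maximal orbit closures — that is, the irreducible components of $\cP_r^{m\times n}$ — are exactly the $r+1$ distinct sets $\overline{{\cal O}({\cal K}_0)},\dots,\overline{{\cal O}({\cal K}_r)}$.

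I expect the main obstacle to be the verification of conditions~(i) and~(ii) in the second paragraph. It becomes routine only once one has guessed the right value $a=\varepsilon(Q)$ and isolated the two facts that make it work: the exact formula $\sum_{i=1}^{k}r_i({\cal K}_a)=\min\{k(n-r),a\}$ (valid precisely because the minimal indices of ${\cal K}_a$ are as equal as possible) and the slack afforded by $\nr Q\le r$. One must also stay attentive to the precise partial-sum meaning of the list majorization and to the boundary cases $\alpha=0$ or $\beta=0$ in~\eqref{oka}, where the blocks $L_\alpha$ or $L_\beta^T$ collapse to the trivial blocks $L_0$ or $L_0^T$.
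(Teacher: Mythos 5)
Your argument is correct in substance, but note first that the paper itself does not prove this theorem: it is imported from \cite[Th.~3.5]{dd-orbits} "for the sake of completeness", so there is no in-paper proof to compare against. What you have written is essentially a reconstruction of the argument of that reference, and it runs on exactly the machinery the present paper redeploys elsewhere: your second paragraph is the computation of Lemma~\ref{lemma:inclusion}, extended from $\nr Q=r$ to $\nr Q=r'\leq r$ (the extra slack $k(r-r')$ in the partial sums absorbs the discrepancy between $\min\{k(n-r),a\}$ and $\min\{k(n-r'),\varepsilon(Q)\}$), and the choice $a=\varepsilon(Q)$ together with $\varepsilon(Q)+\eta(Q)\leq \nr Q$ is the right one. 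The incomparability step is also sound: since the two pencils compared have equal normal rank, the constants added in conditions (i)--(ii) of Theorem~\ref{thm:majorization} cancel, and majorization of eventually-zero lists forces the inequality of total sums that you contradict.

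Two repairs are needed. First, your reduction ``the components are the inclusion-maximal members of the finite family $\{\overline{{\cal O}(Q)}\}$'' rests on a false premise: there are infinitely many $m\times n$ pencils in KCF with $\nr Q\leq r$, because the eigenvalues of the regular part vary continuously, so $\cP_r^{m\times n}=\bigcup_Q\overline{{\cal O}(Q)}$ is an infinite union and the finite-decomposition fact does not apply to it directly. The fix is to reverse the order: first prove, as you do, that every $\overline{{\cal O}(Q)}$ with $\nr Q\leq r$ lies in $\overline{{\cal O}({\cal K}_{\varepsilon(Q)})}$; this yields the \emph{finite} covering $\cP_r^{m\times n}=\bigcup_{a=0}^r\overline{{\cal O}({\cal K}_a)}$ by irreducible closed sets, and only then does pairwise incomparability identify the components. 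Second, in the boundary case $m\neq n$, $r=\min\{m,n\}$ (say $m<n$, $r=m$), the Euclidean division of $r-a$ by $m-r=0$ is undefined for $a<r$, only ${\cal K}_r$ exists, and $\cP_r^{m\times n}=\CC^{2mn}=\overline{{\cal O}({\cal K}_r)}$ is irreducible with a single component; you are right to set this case aside, but it is worth recording that the literal count of $r+1$ components requires $n-r\geq 1$ and $m-r\geq 1$ rather than merely ``degenerating''.
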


The description of the irreducible components of $\cP_r^{m\times n}$ given in Theorem \ref{thm:orbits} extends the one by Waterhouse in \cite[Th. 1]{waterhouse} (see also \cite[Cor. 2]{de}), valid only for the irreducible components of the set of $n\times n$ singular matrix pencils (namely, $\cP_{n-1}^{n\times n}$). Later on, Demmel and Edelman provided the generic KCFs of the set of singular $m\times n$ matrix pencils \cite[Cor. 1]{de}, which coincide with the KCFs ${\cal K}_a(\la)$ described in Theorem \ref{thm:orbits} for the cases $r=\min\{m,n\}$ if $m\neq n$, and $r=n-1$ if $m=n$. However, the connection with the irreducible components is not considered in \cite{de}. The original statement of \cite[Th. 3.5]{dd-orbits} we have reproduced in Theorem \ref{thm:orbits} does not include the case $r=\min\{m,n\}$ when $m\neq n$, though the proof is also valid for this case, and for this reason we include it here.

The following description of $\cP_r^{m\times n}$ was recently presented in \cite{dd-addendum} for square matrix pencils. However, it is also valid for rectangular ones, and we state it for this more general case, but we omit the proof since it is completely analogous to that in \cite{dd-addendum}.

\begin{lemma}\label{lemma:tech} {\rm (\cite[Lemma 3.1]{dd-addendum})} Let $r\leq \min\{m,n\}$ be an integer. For each $a=0,1,\hdots,r$, define
$$
{\cal C}_a^r:=\left\{u_1(\la)v_1(\la)^T+\cdots+u_r(\la)v_r(\la)^T\,:\ \ \begin{array}{l}
u_i(\la)\in\CC[\la]^m,v_j(\la)\in\CC[\la]^n,\\
\deg u_i\leq 1, \mbox{\rm for $i=1,\hdots,r$,}\\
\deg v_j\leq 1, \mbox{\rm for $j=1,\hdots,r$,}\\
\deg u_1=\cdots=\deg u_a = 0,\\
\deg v_{a+1}=\cdots=\deg v_r=0
\end{array}\right\}.
$$
Then:
\begin{equation}\label{decomp}
\cP_r^{m\times n}={\cal C}^r_0\cup{\cal C}^r_1\cup\cdots\cup{\cal C}^r_r\,.
\end{equation}
\end{lemma}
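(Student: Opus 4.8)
The plan is to prove the two inclusions in \eqref{decomp} separately, the inclusion $\bigcup_{a=0}^{r}\mathcal C_a^r\subseteq\cP_r^{m\times n}$ being the easy one. In every summand $u_i(\la)v_i(\la)^T$ of an element of $\mathcal C_a^r$ at least one of the two factors is constant ($u_i$ if $i\le a$, $v_i$ if $i>a$), so $u_i(\la)v_i(\la)^T$ has entries of degree at most $1$ and is therefore a matrix pencil; a sum of $r$ such pencils is again a pencil, and its rank over $\CC(\la)$ is at most $r$ because it is a sum of $r$ matrices of rank at most $1$ over $\CC(\la)$. Hence every element of $\mathcal C_a^r$ lies in $\cP_r^{m\times n}$.

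For the reverse inclusion, take $Q(\la)\in\cP_r^{m\times n}$ and set $\rho:=\nr Q\le r$. Writing $Q(\la)=E\,\mathcal K(\la)\,F$ with $E\in\CC^{m\times m}$, $F\in\CC^{n\times n}$ constant nonsingular and $\mathcal K(\la)=\mathrm{KCF}(Q)$, and noting that left/right multiplication by a constant nonsingular matrix preserves the degree of a vector polynomial (it sends a nonzero leading-coefficient vector to a nonzero vector), one sees that if $\mathcal K(\la)=\sum_{i=1}^{r}u_i(\la)v_i(\la)^T$ realizes the $0/1$ degree pattern of some $\mathcal C_a^r$, then so does $Q(\la)=\sum_{i=1}^{r}(Eu_i(\la))(F^Tv_i(\la))^T$. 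Thus it suffices to decompose $\mathcal K(\la)$, and since a block-diagonal pencil is the sum of the decompositions of its diagonal blocks (embedding each factor vector into $\CC^m$, resp. $\CC^n$, by zeros), it is enough to decompose each Kronecker block. I would write these out explicitly, denoting by $e_i$ the standard basis vectors: a right singular block $L_\ve$ equals $\sum_{i=1}^{\ve}e_i(\la e_i+e_{i+1})^T$, i.e.\ $\ve$ rank-$1$ pencils each with constant left factor and degree-$1$ right factor; dually, a left singular block $L_\eta^T$ equals $\sum_{j=1}^{\eta}(\la e_j+e_{j+1})e_j^T$, i.e.\ $\eta$ rank-$1$ pencils with constant right factor; and a regular block $J_k(\mu)+\la I_k$ or $N_u$, read row-wise or column-wise, yields $k$ (resp.\ $u$) rank-$1$ pencils which can be taken with all left factors constant or with all right factors constant. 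Adding up, this produces $\ve(Q)+\eta(Q)+D$ summands, where $D$ is the total size of the regular part; since $Q$ has $m-\rho$ left singular blocks (contributing $\eta(Q)+(m-\rho)$ rows) and $n-\rho$ right singular blocks (contributing $\ve(Q)$ rows), counting rows gives $m=\ve(Q)+\eta(Q)+(m-\rho)+D$, so the number of summands is exactly $\rho$.

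It remains to match the pattern of some $\mathcal C_a^r$. I would pad the list of $\rho$ summands with $r-\rho$ trivial ones — zero pencils, or copies obtained by rescaling one existing summand, which then inherit its type — to get $r$ rank-$1$ pencils. Among them exactly $\ve(Q)$ are forced to have a constant left factor (their right factor has degree $1$, so no other pencil decomposition of that rank-$1$ pencil is possible), exactly $\eta(Q)$ are forced to have a constant right factor, and every remaining one may be assigned either type; and $\ve(Q)+\eta(Q)\le\rho\le r$. Hence any integer $a$ with $\ve(Q)\le a\le r-\eta(Q)$ is attainable by ordering the summands so that the first $a$ have a constant left factor and the last $r-a$ have a constant right factor, which places $Q(\la)$ in $\mathcal C_a^r$ and completes the proof.

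There is no conceptual obstacle in this argument; the work is entirely in the explicit block-wise decompositions and in the bookkeeping of the last step. The one point that has to be verified with some care is precisely that bookkeeping: that the number of rank-$1$ summands actually needed is $\rho=\ve(Q)+\eta(Q)+D$ and not more, so that after padding up to $r$ summands the two forced groups, of sizes $\ve(Q)$ and $\eta(Q)$, fit disjointly inside $\{1,\dots,r\}$ and still leave enough freely assignable summands to interpolate every admissible value of $a$.
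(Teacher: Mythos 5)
Your proof is correct and follows essentially the same route as the paper: the paper omits the proof of Lemma~\ref{lemma:tech} by reference to \cite[Lemma 3.1]{dd-addendum}, but Remark~\ref{rem-R} spells out exactly your argument --- reduce to the KCF by strict equivalence, decompose $L_\ve$ row-wise with constant column factors, $L_\eta^T$ column-wise with constant row factors, Jordan blocks either way, and count $s=r-\epsq-\etaq$ summands for the regular part. Your bookkeeping (number of summands equals $\nr Q$, padding with zero summands, and attainability of every $a$ with $\epsq\le a\le r-\etaq$) matches the paper's conventions, including its use of identically zero factors for padding.
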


Both Theorem \ref{thm:orbits} and Lemma \ref{lemma:tech} provide a description of $\cP_r^{m\times n}$ as the union of $r+1$ sets.
It has been recently proved in \cite[Prop. 5.1 ]{dd-addendum} that both descriptions coincide in the case $r=1$, namely, that $\overline{{\cal O}({\cal K}_0)}={\cal C}^1_0$ and $\overline{{\cal O}({\cal K}_1)}={\cal C}^1_1$. It is natural to ask whether the same holds for arbitrary $r$, namely, whether the $r+1$ sets in Theorem \ref{thm:orbits} coincide with the $r+1$ sets in Lemma \ref{lemma:tech} (after an appropriate reordering if needed). To answer this question is our main purpose. More precisely, the main goal of this paper is to prove the following result:

\begin{theorem}\label{thm:main}
Let $\overline\ok$ and $\Ca$, for $a=0,1,\hdots,r$, be the sets defined in Theorem {\rm\ref{thm:orbits}} and Lemma {\rm\ref{lemma:tech}}, respectively. Then:
\begin{itemize}
\item[{\rm (a)}] $\overline\ok=\Ca$.
\item[{\rm (b)}] The set $\cP_r^{m\times n}$ is closed in the Zariski topology of $\CC^{2mn}$ and has exactly $r+1$ irreducible components. These irreducible components are $\Ca$, for $a=0,1,\hdots,r$.
\end{itemize}
\end{theorem}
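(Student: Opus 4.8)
The plan is to prove part (a), i.e., $\overline{\mathcal O(\mathcal K_a)}=\mathcal C_a^r$ for every $a$, since part (b) then follows immediately: by Theorem \ref{thm:orbits} the sets $\overline{\mathcal O(\mathcal K_a)}$ are precisely the $r+1$ irreducible components of $\mathcal P_r^{m\times n}$, and Lemma \ref{lemma:tech} already gives $\mathcal P_r^{m\times n}=\bigcup_{a=0}^r\mathcal C_a^r$; once (a) is established, the two decompositions are literally the same, so the closedness and the list of irreducible components transfer verbatim. So the whole content is in (a).

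I would prove the two inclusions separately. For $\overline{\mathcal O(\mathcal K_a)}\subseteq\mathcal C_a^r$: first observe that $\mathcal K_a(\lambda)$ itself lies in $\mathcal C_a^r$, by writing each right singular block $L_\ell$ and each left singular block $L_\ell^T$ explicitly as a sum of rank-$1$ pencils $u(\lambda)v(\lambda)^T$ with the prescribed $0/1$ degree pattern — a block $L_\ell$ (size $\ell\times(\ell+1)$) decomposes as $\sum_{k=1}^{\ell}e_k(\lambda e_k+e_{k+1})^T$, contributing $\ell$ summands with $\deg u=0$, $\deg v=1$, and dually $L_\ell^T$ contributes summands with $\deg u=1$, $\deg v=0$; counting indices shows the total number of ``$\deg v=1$'' summands is exactly $a=\alpha(n-r)+s$ and the number of ``$\deg u=1$'' summands is exactly $r-a$, which is the defining condition of $\mathcal C_a^r$. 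Next, $\mathcal C_a^r$ is closed (it is the image of the affine space of tuples $(u_i,v_j)$ under the polynomial sum-of-products map restricted to the relevant degree strata, and one argues — either by a direct compactness/bounding argument in the linear-algebra proof, or projectively in the algebro-geometric proof — that this image is closed) and invariant under strict equivalence ($E(\sum u_iv_i^T)F=\sum(Eu_i)(F^Tv_i)^T$ preserves degrees of the factors). A closed, strict-equivalence-invariant set containing $\mathcal K_a$ must contain $\overline{\mathcal O(\mathcal K_a)}$.

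For the reverse inclusion $\mathcal C_a^r\subseteq\overline{\mathcal O(\mathcal K_a)}$: since $\mathcal C_a^r\subseteq\mathcal P_r^{m\times n}$ and $\mathcal P_r^{m\times n}=\bigcup_b\overline{\mathcal O(\mathcal K_b)}$, it suffices to show that $\mathcal C_a^r$ is not contained in $\overline{\mathcal O(\mathcal K_b)}$ for any $b\neq a$, together with an irreducibility/genericity argument: $\mathcal C_a^r$ is an irreducible set (continuous image of an irreducible affine space), so it lies in a single component $\overline{\mathcal O(\mathcal K_b)}$, and one pins down $b=a$ by exhibiting one pencil in $\mathcal C_a^r$ — e.g. $\mathcal K_a$ itself, or a suitably generic element — whose Kronecker invariants $\mathcal R$, $\mathcal L$ force, via the majorization conditions of Theorem \ref{thm:majorization}, membership in $\overline{\mathcal O(\mathcal K_a)}$ and no other. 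The key computation here is that a generic element of $\mathcal C_a^r$ has exactly the minimal-index partition of $\mathcal K_a$ (balanced right indices summing to $a$, balanced left indices summing to $r-a$, no finite or infinite elementary divisors), which is precisely what characterizes $\overline{\mathcal O(\mathcal K_a)}$ as the maximal orbit closure with those rank-type data.

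The main obstacle I expect is the reverse inclusion, and specifically controlling the Kronecker structure of a general member $\sum_{i=1}^r u_i(\lambda)v_i(\lambda)^T$ of $\mathcal C_a^r$: while a \emph{generic} such sum is easy to analyze, $\mathcal C_a^r$ contains many degenerate sums, and one must argue that all of them still land in $\overline{\mathcal O(\mathcal K_a)}$ — equivalently, that $\mathcal C_a^r$ is irreducible and its generic point has the right invariants, so that the whole (closed, once we know it) set sits inside the single component $\overline{\mathcal O(\mathcal K_a)}$. In the linear-algebra proof this is where the bulk of the technical lemmas in Section \ref{sec:main} go (explicit normal forms and continuity/perturbation arguments showing any element of $\mathcal C_a^r$ is a limit of strict-equivalence copies of $\mathcal K_a$); in the algebro-geometric proof it reduces to a dimension count — showing $\dim\mathcal C_a^r=\dim\overline{\mathcal O(\mathcal K_a)}$ — combined with irreducibility of $\mathcal C_a^r$ and the inclusion $\mathcal C_a^r\subseteq\mathcal P_r^{m\times n}$.
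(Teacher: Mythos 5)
Your proposal is correct in outline and is closest to the paper's algebraic-geometry proof in Section \ref{sec:ag}, but the two inclusions are handled with a different mix of ingredients. For $\overline{{\cal O}({\cal K}_a)}\subseteq{\cal C}_a^r$ you argue as the paper does: ${\cal K}_a\in{\cal C}_a^r$ (Remark \ref{rem-ka}), invariance under strict equivalence, and closedness of ${\cal C}_a^r$. One caution: closedness does \emph{not} follow from a routine compactness/bounding argument on the factors, because the individual summands $u_i(\la)v_i(\la)^T$ of a bounded sum need not be bounded (cancellation among summands); this is exactly why the paper projectivizes and invokes the theorem that a regular map from a product of projective spaces has closed image, and you should commit to that route. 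For the reverse inclusion ${\cal C}_a^r\subseteq\overline{{\cal O}({\cal K}_a)}$ your argument is genuinely different from both of the paper's: irreducibility of ${\cal C}_a^r$ (a continuous image of an irreducible affine space), containment in $\cP_r^{m\times n}=\bigcup_b\overline{{\cal O}({\cal K}_b)}$ with each term closed, hence containment in a single $\overline{{\cal O}({\cal K}_b)}$, and $b=a$ pinned down because ${\cal K}_a\in\overline{{\cal O}({\cal K}_b)}$ forces $b\geq a$ and $r-b\geq r-a$ by summing the majorization conditions of Theorem \ref{thm:majorization}. This is valid and shorter than the paper's linear-algebra treatment (Lemmas \ref{lemma:epseta} and \ref{lemma:inclusion}, the $\epsilon$-perturbation step for pencils of normal rank below $r$, and the three-case KCF analysis), and it also avoids the dimension count $\tdim{\cal C}_a^r\leq r(2m+n-r)+a(n-m)=\tdim\overline{{\cal O}({\cal K}_a)}$ of the second proof; the price is that it relies on the full classification in Theorem \ref{thm:orbits}, which the paper's dimension-count alternative was designed to bypass. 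Note that this direction needs only irreducibility of ${\cal C}_a^r$, not its closedness, so the logical dependencies separate cleanly.
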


Claim (b) in Theorem \ref{thm:main} is an immediate consequence of claim (a) and Theorem \ref{thm:orbits}. So
it remains to prove claim (a),  and this is the goal of the first proof we offer of Theorem \ref{thm:main}. In contrast, the second proof we present, via algebraic geometry, allows us to directly obtain (b) without using (a). More precisely, the second proof proceeds by exhibiting (the projectivization of) $\Ca$ as the image of a regular map from a product of projective spaces, which immediately implies it is Zariski closed and irreducible. This, together with Lemma \ref{lemma:tech}, proves (b). Part (a) then follows from the fact that ${\cal K}_a(\la)\in\Ca$, which implies that $\overline{{\cal O}({\cal K}_a)}\subseteq\Ca$ by the invariance of $\Ca$ under strict equivalence and the fact that $\Ca$ is closed, together with a dimensional count.

Theorem \ref{thm:main} provides a new description of the irreducible components of $\cP_r^{m\times n}$. We present, in Sections \ref{sec:la} and \ref{sec:ag},  the two different proofs of Theorem \ref{thm:main} mentioned above. The first one, in Section \ref{sec:la}, is based on a purely linear algebra approach, whereas the second one, in Section \ref{sec:ag}, uses standard facts from algebraic geometry.
The main difference is the first proof uses the classical topology (where closure is
defined by taking limits), so one must study limits, whereas the second proof uses the
Zariski topology (where the closed sets are,  by definition, the zero sets of polynomials), which, combined with basic
facts about projective varieties, leads to a quick proof.


\section{The linear algebra approach}\label{sec:la}\label{sec:main}

The expression for matrix pencils in ${\Ca}$ given in Lemma \ref{lemma:tech} is closely related with the KCF. This connection is underlying in a relevant portion of the first proof of Theorem \ref{thm:main}, and it is explained in Remark \ref{rem-R} for further reference.

\begin{Rem}\label{rem-R}
A given $m\times n$ matrix pencil $Q(\la)$ in {\rm KCF} with $\nr Q=r$ can be expressed in a natural way as in the definition of $\Ca$ in Lemma {\rm\ref{lemma:tech}} as follows. Let
$$
Q(\la)=\diag (L_{\varepsilon_1},\hdots,L_{\varepsilon_p},L_{\eta_1}^T,\hdots,L_{\eta_q}^T,J_Q),
$$
where $J_Q$ is a direct sum of Jordan blocks (that is, of types {\rm(3)} and {\rm(4)} in Theorem {\rm\ref{thm:kcf}}). Let $J_Q$ have size $s\times s$. Then
$$
\begin{array}{ccc}
m&=&\varepsilon_1+\cdots+\varepsilon_p+\eta_1+\cdots+\eta_q+q+s=\varepsilon(Q)+\eta(Q)+q+s,\\
n&=&\varepsilon_1+\cdots+\varepsilon_p+p+\eta_1+\cdots+\eta_q+s=\varepsilon(Q)+\eta(Q)+p+s,
\end{array}
$$
and $r=m-q=n-p$, so $s=r-\varepsilon(Q)-\eta(Q)$. Then, following the proof of Lemma {\rm3.1} in {\rm\cite{dd-addendum}}, we can write
\begin{equation}\label{qdecomp}
\begin{array}{cl}
Q(\la)=&u_1(\la)v_1(\la)^T+\cdots+u_{\varepsilon(Q)}(\la)v_{\varepsilon(Q)}(\la)^T\\
&+\widetilde u_1(\la)\widetilde v_1(\la)^T+\cdots+
\widetilde u_{\eta(Q)}(\la)\widetilde v_{\eta(Q)}(\la)^T\\
&+\widehat u_1(\la)\widehat v_1(\la)^T+\cdots+\widehat u_{s}(\la)\widehat v_{s}(\la)^T\,,
\end{array}
\end{equation}
where
\begin{itemize}
\item[(a)] $\deg u_1=\cdots=\deg u_{\varepsilon(Q)}=0$,
\item[(b)] $\deg \widetilde v_1=\cdots=\deg \widetilde v_{\eta(Q)}=0$,
\item[(c)] for each $i=1,\hdots,s$, we can choose either $\deg \widehat u_i=0$ or $\deg \widehat v_i=0$.
\end{itemize}
Moreover:
\begin{itemize}
\item[(a)] The sum $u_1(\la)v_1(\la)^T+\cdots+u_{\varepsilon(Q)}(\la)v_{\varepsilon(Q)}(\la)^T$ corresponds to the right singular blocks, $L_{\varepsilon_1},\hdots,L_{\varepsilon_p}$.
\item[(b)] The sum $\widetilde u_1(\la)\widetilde v_1(\la)^T+\cdots+\widetilde u_{\eta(Q)}(\la)\widetilde v_{\eta(Q)}(\la)^T$ corresponds to the left singular blocks, $L_{\eta_1}^T,\hdots,L_{\eta_q}^T$.
\item[(c)] The sum $\widehat u_1(\la)\widehat v_1(\la)^T+\cdots+\widehat u_{s}(\la)\widehat v_{s}(\la)^T$ corresponds to the regular part $J_Q$.
\end{itemize}
More precisely, each right singular block $L_{\varepsilon_i}$, with $\varepsilon_i>0$, can be decomposed as a sum of $\varepsilon_i$ rank-$1$ pencils of the form $u(\la)v(\la)^T$, with $\deg u=0$ and $\deg v=1$, as indicated in the proof of {\rm\cite[Lemma 3.1]{dd-addendum}}. Adding up the sums corresponding to all right singular blocks with positive order we get the sum in (a) above. Each left singular block $L_{\eta_j}^T$, with $\eta_j>0$, can be written as a sum of $\eta_j$ rank-$1$ pencils of the form $\widetilde u(\la)\widetilde v(\la)^T$ with $\deg \widetilde v=0$, as indicated in the proof of {\rm\cite[Lemma 3.1]{dd-addendum}}. Adding up the sums corresponding to all left singular blocks with positive order we get the sum in (b) above. Finally, any Jordan block of size $k\times k$ (finite or infinite) can be written as a sum of $k$ rank-$1$ pencils of the form $\widehat u(\la)\widehat v(\la)^T$, with either $\deg \widehat u=0$ or $\deg\widehat v=0$ and $\deg\widehat u=1$. This is shown in the proof of {\rm\cite[Lemma 3.1]{dd-addendum}} for either all rows $\widehat v(\la)^T$ with degree $0$ or all columns $\widehat u(\la)$ with degree $0$. To get the general decomposition, having $i$ rows with degree $0$ and $k-i$ columns with degree $0$, for $0\leq i\leq k$, we can decompose any $k\times k$ Jordan block, denoted by $J$, as:
$$
J=e_1{\rm Row}_1(J)+\cdots+e_{i-1}{\rm Row}_{i}(J)+J_{ii}e_ie_i^T+{\rm Col}_{i+1}(J)e_{i+1}^T+\cdots+{\rm Col}_k(J)e_k^T,
$$
with $e_j$ being the $j$th column of $I_k$. Adding upthe sums corresponding to all Jordan blocks in $Q(\la)$ we arrive to the
sum in (c) above. The decomposition \eqref{qdecomp} will be often used in the proof of the main result.
\end{Rem}

\begin{Rem}\label{rem-ka}
An immediate consequence of the decomposition explained in Remark {\rm\ref{rem-R}} is that the pencil ${\cal K}_a(\la)$ in Theorem {\rm\ref{thm:orbits}} belongs to the set ${\cal C}_a^r$ in Lemma {\rm\ref{lemma:tech}}. To see this, just note that $\varepsilon({\cal K}_a)=a$, $\eta({\cal K}_a)=r-a$, and that ${\cal K}_a(\la)$ has no regular part..
\end{Rem}

In order to give our first proof of Theorem \ref{thm:main}, we first state and prove several auxiliary results that we will use along the proof. The proof of Lemma \ref{lemma:sum} is omitted, since it is a standard fact.

\begin{lemma}\label{lemma:epseta}
If $Q(\la)\in\Ca$ and $\nr Q=r$, then
\begin{itemize}
\item[{\rm(i)}] $\epsq\leq a$, and
\item[{\rm (ii)}] $\etaq\leq r-a$.
\end{itemize}
\end{lemma}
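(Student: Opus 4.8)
The plan is to prove (i) and then deduce (ii) from it by transposition, and to prove (i) by reducing the sum of right minimal indices of $Q$ to that of a single auxiliary pencil whose $\la$-part has rank at most $a$.

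\emph{Transposition reduces (ii) to (i).} If $Q(\la)=\sum_{i=1}^{r}u_i(\la)v_i(\la)^T\in\Ca$ as in Lemma~\ref{lemma:tech}, then $Q(\la)^T=\sum_{i=1}^{r}v_i(\la)u_i(\la)^T$; after listing first the $r-a$ summands whose column factor is constant (the original $v_{a+1},\dots,v_r$), a direct comparison with the definition in Lemma~\ref{lemma:tech} shows $Q(\la)^T\in{\cal C}_{r-a}^{\,r}$ as an $n\times m$ pencil. Since $\nr Q^{T}=\nr Q=r$ and the left minimal indices of $Q$ are precisely the right minimal indices of $Q^{T}$ (transposing a row vector does not change its degree), the desired inequality $\etaq\le r-a$ is exactly statement (i) applied to $Q^{T}$. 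So I would only need to prove (i).

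\emph{Proof of (i).} Write $Q(\la)=U(\la)V(\la)^{T}$ with $U(\la)=[\,u_1(\la)\mid\cdots\mid u_r(\la)\,]\in\CC[\la]^{m\times r}$ and $V(\la)=[\,v_1(\la)\mid\cdots\mid v_r(\la)\,]\in\CC[\la]^{n\times r}$. Then $V(\la)^{T}$ is an $r\times n$ matrix pencil whose rows $a+1,\dots,r$ are constant, so the coefficient matrix $D\in\CC^{r\times n}$ of $\la$ in $V(\la)^{T}$ has at most $a$ nonzero rows and hence $\operatorname{rank}(D)\le a$. Because $\nr Q=r$ and $\nr Q\le\nr U\le r$, the pencil $U(\la)$ has full column rank $r$ over $\CC(\la)$, so it is injective as a $\CC(\la)$-linear map; therefore $Q(\la)x(\la)\equiv 0\iff V(\la)^{T}x(\la)\equiv 0$, i.e.\ ${\cal N}_{r}(Q)={\cal N}_{r}(V(\la)^{T})$. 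As the right minimal indices of a pencil are the degrees of a minimal polynomial basis of its right rational nullspace and depend only on that nullspace, $\epsq=\varepsilon\bigl(V(\la)^{T}\bigr)$.

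\emph{Conclusion.} The final ingredient is the elementary bound $\varepsilon(P)\le\operatorname{rank}(B)$ valid for every pencil $P(\la)=A+\la B$: in $\operatorname{KCF}(P)$ the $\la$-coefficient of each right singular block $L_{\varepsilon_i}$ has rank exactly $\varepsilon_i$, those of the remaining blocks have nonnegative rank, ranks add over the block-diagonal direct sum, and $\operatorname{rank}(B)$ is invariant under strict equivalence. Taking $P=V(\la)^{T}$ gives $\epsq=\varepsilon\bigl(V(\la)^{T}\bigr)\le\operatorname{rank}(D)\le a$, which is (i); then (ii) follows from the transposition step. I expect the only point requiring care to be the identity $\epsq=\varepsilon(V(\la)^{T})$, i.e.\ the fact that the sum of right minimal indices is an invariant of the right rational nullspace and not merely of the pencil; this is standard minimal-basis theory and is already implicit in the facts about the KCF recalled earlier. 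Everything else is bookkeeping with the $0/1$ degree pattern defining $\Ca$.
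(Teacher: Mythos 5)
Your proposal is correct. Part (ii) is handled exactly as in the paper: the observation that $Q(\la)^T\in{\cal C}_{r-a}^{\,r}$ together with $\eta(Q)=\varepsilon(Q^T)$ reduces (ii) to (i). For part (i), however, the paper gives no argument at all beyond a citation: it simply invokes the last sentence of Lemma 2.8 in \cite{dd-kron}. You instead supply a self-contained proof: factor $Q(\la)=U(\la)V(\la)^T$, use $\nr Q=r$ to conclude that $U(\la)$ has full column normal rank and hence ${\cal N}_r(Q)={\cal N}_r(V(\la)^T)$, and then bound $\varepsilon(V(\la)^T)$ by the rank of the $\la$-coefficient of $V(\la)^T$, which has at most $a$ nonzero rows by the degree pattern defining $\Ca$. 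Both steps check out: the bound $\varepsilon(P)\le\operatorname{rank}(B)$ for $P=A+\la B$ follows, as you say, from the block structure of the $\la$-coefficient of the KCF and the invariance of $\operatorname{rank}(B)$ under strict equivalence. The one ingredient you correctly flag, namely that $\epsq$ is determined by the rational nullspace ${\cal N}_r(Q)$ alone (so that equality of nullspaces gives $\epsq=\varepsilon(V(\la)^T)$), is standard minimal-basis theory and is consistent with how the paper itself relates the orders of the right singular blocks to polynomial bases of ${\cal N}_r(Q)$, though the paper never states the uniqueness explicitly. The trade-off is clear: the paper's proof is shorter but opaque without consulting \cite{dd-kron}, while yours is longer but fully verifiable within the framework of Section 2 plus one standard fact about minimal bases.
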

\begin{proof} Since $Q(\la)\in\Ca$, it has a decomposition like in the statement of Lemma \ref{lemma:tech}, with at most $a$ row vectors $v_1(\la)^T,\hdots,v_a(\la)^T$ having degree exactly $1$. Then $\epsq\leq a$, by the last sentence in the statement of Lemma 2.8 in \cite{dd-kron}. Part (ii) is an immediate consequence of part (i) and the facts:
\begin{itemize}
\item If $Q(\la)\in\Ca$ then $Q(\la)^T\in{\cal C}_{r-a}^r$ (with size $n\times m$), and
\item $\eta(Q)=\varepsilon(Q^T)$.
\end{itemize}
\end{proof}

\begin{lemma}\label{lemma:sum}
Let $\cS=(\beta_1,\hdots,\beta_s)$ be a list of nonnegative integers, and let $r_i(\cS)$ be the number of elements in $\cS$ which are greater than or equal to $i$, for $i=1,2,\hdots$. Then
$$
\sum_{i=1}^\infty r_i(\cS)=\beta_1+\cdots+\beta_s.
$$
\end{lemma}

Note that, as a consequence of Lemma \ref{lemma:sum}, if $Q(\la)$ is any matrix pencil, then
\begin{eqnarray}
\displaystyle\sum_{i=1}^\infty r_i(Q)=\epsq,
\label{sumesp}\\
\displaystyle\sum_{i=1}^\infty \ell_i(Q)=\etaq.\label{sumeta}
\end{eqnarray}

\begin{lemma}\label{lemma:inclusion}
If $Q(\la)$ is an $m\times n$ matrix pencil such that
\begin{itemize}
\item[{\rm (i)}] $\nr Q=r$,
\item[{\rm(ii)}] $\epsq\leq a$, and
\item[{\rm (iii)}] $\etaq\leq r-a$,
\end{itemize}
then $\cloq\subseteq \overline\ok$.
\end{lemma}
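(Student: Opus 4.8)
My strategy is to reduce the problem to the majorization criterion of Theorem~\ref{thm:majorization}, applied with $P(\la)={\cal K}_a(\la)$ and the given $Q(\la)$. I first note that $\nr{\cal K}_a=r=\nr Q$ by construction (the block structure in \eqref{oka} has exactly $n-r$ right and $m-r$ left singular blocks and no regular part), so the ``$+\nr P$'' and ``$+\nr Q$'' shifts in conditions (i) and (ii) of Theorem~\ref{thm:majorization} cancel, and those two conditions become simply $\cR({\cal K}_a)\geq\cR(Q)$ and $\cL({\cal K}_a)\geq\cL(Q)$. Condition (iii) is automatic: ${\cal K}_a(\la)$ has no finite or infinite elementary divisors, so ${\cal J}_\mu({\cal K}_a)$ is the empty list for every $\mu$, and $r_0({\cal K}_a)=n-r$ while $r_0(Q)\leq n-r$ would need checking — actually since $\nr Q=r$ we also get $r_0(Q)=n-r$, so the inequality ${\cal J}_\mu({\cal K}_a)+r_0({\cal K}_a)\leq{\cal J}_\mu(Q)+r_0(Q)$ reads $(n-r,n-r,\dots)\leq{\cal J}_\mu(Q)+(n-r,n-r,\dots)$ coordinatewise on partial sums, which holds since ${\cal J}_\mu(Q)$ has nonnegative entries. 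Hence everything comes down to the two majorizations on the $\cR$ and $\cL$ lists.

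Next I would establish $\cR({\cal K}_a)\geq\cR(Q)$, i.e. $\sum_{i=1}^j r_i({\cal K}_a)\geq\sum_{i=1}^j r_i(Q)$ for all $j\geq 1$ (the $i=0$ entries agree, both equal to $n-r$, so I may start the partial sums at $i=1$). By \eqref{sumesp}, $\sum_{i=1}^\infty r_i(Q)=\epsq\leq a$ by hypothesis~(ii). On the other hand ${\cal K}_a(\la)$ has right singular blocks consisting of $s$ copies of $L_{\alpha+1}$ and $n-r-s$ copies of $L_\alpha$, where $a=\alpha(n-r)+s$ with $0\leq s<n-r$; thus $\sum_{i=1}^\infty r_i({\cal K}_a)=s(\alpha+1)+(n-r-s)\alpha=\alpha(n-r)+s=a$. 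So the right-singular-block list of ${\cal K}_a$ is the partition of $a$ into $n-r$ parts that is ``as balanced as possible'', hence it majorizes \emph{any} list of at most $n-r$ nonnegative integers summing to at most $a$ — and $\cR(Q)$, read as the column-minimal-index partition of $Q$, is exactly such a list (it has $n-r$ parts since $\nr Q=r$, and sums to $\epsq\leq a$). The key elementary fact here is: among all partitions of a fixed integer into a fixed number of parts, the balanced one is the unique maximum in the majorization (dominance) order, and decreasing the total sum only decreases the partition in that order. I would state this as a small observation (or cite it as standard) and apply it. The same argument, applied to transposes using $\eta(Q^T)=\epsq$-type symmetry — more precisely using hypothesis~(iii) $\etaq\leq r-a$, the identity $\sum_i\ell_i(Q)=\etaq$ from \eqref{sumeta}, and the fact that the left singular blocks of ${\cal K}_a$ form the balanced partition of $r-a$ into $m-r$ parts — gives $\cL({\cal K}_a)\geq\cL(Q)$.

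With both majorizations in hand, Theorem~\ref{thm:majorization} yields $\overline{{\cal O}(Q)}\subseteq\overline{{\cal O}({\cal K}_a)}=\overline\ok$, which is the claim. The main obstacle I anticipate is bookkeeping rather than conceptual: one must be careful that $\cR(Q)$ genuinely has the same number of entries ($n-r$) as $\cR({\cal K}_a)$ so that ``majorization of partitions'' applies cleanly — this uses $\nr Q=r$, which forces $Q$ to have exactly $n-r$ right singular blocks (some possibly of order $0$) and $m-r$ left singular blocks (some possibly of order $0$), matching the counts in \eqref{oka}. A secondary subtlety is the precise statement and justification of the ``balanced partition is majorization-maximal, and shrinking the sum decreases it'' lemma; I expect a two-line proof (move a unit from a large part to a small part; or observe partial sums of the sorted balanced partition dominate those of any other partition with $\le$ the same total), but it should be spelled out or given a citation to keep the argument self-contained.
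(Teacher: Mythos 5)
Your proposal is correct and follows essentially the same route as the paper: reduce to the majorization criterion of Theorem~\ref{thm:majorization}, observe that condition (iii) is vacuous because ${\cal K}_a(\la)$ has no Jordan blocks and $r_0(Q)=r_0({\cal K}_a)=n-r$, and then verify the two remaining majorizations from the identity $\sum_{i\geq1}r_i(Q)=\epsq\leq a$ together with the uniform bound $r_i(Q)\leq n-r$ (and the analogous facts for the left lists). One caveat: the ``key elementary fact'' as you state it is backwards --- among partitions of $a$ into $n-r$ parts the balanced one is the \emph{minimum}, not the maximum, in the dominance order; what you actually need (and what your partial-sum computation already delivers) is the conjugate statement, namely that the list $(r_1({\cal K}_a),r_2({\cal K}_a),\hdots)=(n-r,\hdots,n-r,s,0,\hdots)$, whose $k$th partial sum equals $\min\{k(n-r),a\}$, dominates any list with entries at most $n-r$ and total at most $a$. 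Relatedly, $\cR(Q)$ is not ``the column-minimal-index partition of $Q$'' but its conjugate; once that conflation is fixed, your argument coincides with the one in the paper.
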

\begin{proof} Since $\nr Q=r=\nr {\cal K}_a$, we have $r_0(Q)=r_0({\cal K}_a)$. Moreover, since KCF$({\cal K}_a)$ has no Jordan blocks at all (neither finite nor infinite), looking at the majorization conditions for $\cloq\subseteq \overline\ok$ in Theorem \ref{thm:majorization}, it suffices to prove that
\begin{itemize}
\item[(a)] $(r_1({\cal K}_a),r_2({\cal K}_a),\hdots)\geq (r_1(Q),r_2(Q),\hdots)$, and
\item[(b)] $(\ell_1({\cal K}_a),\ell_2({\cal K}_a),\hdots)\geq (\ell_1(Q),\ell_2(Q),\hdots)$.
\end{itemize}

To prove (a) and (b) first note that
$$
\begin{array}{c}
(r_1({\cal K}_a),r_2({\cal K}_a),\hdots)=(\overbrace{n-r,\hdots,n-r}^\alpha,s,0,0,\hdots)\\
(\ell_1({\cal K}_a),\ell_2({\cal K}_a),\hdots)=(\underbrace{m-r,\hdots,m-r}_\beta,t,0,0,\hdots),
\end{array}
$$
with $\alpha,\beta,s,t$ being as in Theorem \ref{thm:orbits}. Since, for all $i=1,2,\hdots$ the inequalities
$$
r_i(Q)\leq n-r,\qquad\mbox{and}\qquad \ell_i(Q)\leq m-r
$$
hold, it follows that
$$
\begin{array}{cc}
\displaystyle\sum_{i=1}^kr_i(Q)\leq\sum_{i=1}^kr_i({\cal K}_a)=k(n-r),&\mbox{for $1\leq k\leq\alpha$,}\\
\displaystyle\sum_{i=1}^k\ell_i(Q)\leq\sum_{i=1}^k\ell_i({\cal K}_a)=k(m-r),&\mbox{for $1\leq k\leq\beta$}.
\end{array}
$$
Now, if there is some $k\geq\alpha+1$ such that
$$
\sum_{i=1}^kr_i(Q)>\sum_{i=1}^kr_i({\cal K}_a)=\alpha(n-r)+s=a,
$$
or, if there is some $k\geq\beta+1$ such that
$$
\sum_{i=1}^k\ell_i(Q)>\sum_{i=1}^k\ell_i({\cal K}_a)=\beta(m-r)+t=r-a,
$$
then by \eqref{sumesp} or \eqref{sumeta}, respectively, it should be
$$
\epsq\geq\sum_{i=1}^kr_i(Q)>a,
$$
or
$$
\etaq\geq\sum_{i=1}^k\ell_i(Q)>r-a,
$$
which is in contradiction with hypothesis (ii) or (iii), respectively.
\end{proof}

In the following, we make use of the Frobenius norm. Let us recall that, for any complex matrix $M=(m_{ij})$, the {\em Frobenius norm} of $M$ is $\| M\|_F:=\left(\sum_{i,j}|m_{ij}|^2\right)^{1/2}$. In particular, for a vector $u=\left[\begin{array}{ccc}u_1&\hdots&u_n\end{array}\right]^T\in\CC^n$, the Frobenius norm of $u$ is the standard $2$-norm $\|u\|_2:=\left(\sum_{i=1}^n|u_i|^2\right)^{1/2}$. For a complex matrix pencil $A+\la B$ the Frobenius norm is defined as $\|A+\la B\|_F:=\|\left[\begin{array}{cc}A&B\end{array}\right]\|_F$ (the Frobenius norm for matrix pencils will be used only in the first part of the proof of Theorem \ref{thm:main}).

The following lemma is a direct consequence of the fact that the set of linearly dependent $r$-tuples of vectors in $\CC^n$ is of measure zero
in the set of all $r$-tuples of vectors in $\CC^n$.

\begin{lemma}\label{lemma:pert}
Let $w_1,\hdots,w_r\in\CC^n$, with $r\leq n$, and $\epsilon>0$. Then there exist $w_1^\epsilon,\hdots, w_r^\epsilon\in\CC^n$ such that $\{ w_1^\epsilon,\hdots, w_r^\epsilon\}$ is a linearly independent set and $\|w_i- w_i^\epsilon\|_2\leq\epsilon$, for $i=1,\hdots,n$.
\end{lemma}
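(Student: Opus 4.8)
The plan is to prove this routine density statement by a short induction that perturbs the vectors one at a time. Concretely, I would prove by induction on $k=0,1,\hdots,r$ that one can choose $w_1^\epsilon,\hdots,w_k^\epsilon\in\CC^n$ with $\|w_i-w_i^\epsilon\|_2\leq\epsilon$ for $i=1,\hdots,k$ and with $\{w_1^\epsilon,\hdots,w_k^\epsilon\}$ linearly independent; the lemma is then the case $k=r$.

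The base case $k=0$ is vacuous. For the inductive step, assume $w_1^\epsilon,\hdots,w_{k-1}^\epsilon$ have already been chosen, where $k\leq r\leq n$, and set $V:=\mathrm{span}\{w_1^\epsilon,\hdots,w_{k-1}^\epsilon\}$. Since $\dim V\leq k-1<n$, the subspace $V$ is a proper linear subspace of $\CC^n$, hence closed with empty interior; therefore $\CC^n\setminus V$ is dense in the classical topology of $\CC^n$. As the closed ball $\{z\in\CC^n:\|z-w_k\|_2\leq\epsilon\}$ has nonempty interior, it meets $\CC^n\setminus V$, and any point $w_k^\epsilon$ in this intersection satisfies both $\|w_k-w_k^\epsilon\|_2\leq\epsilon$ and $w_k^\epsilon\notin\mathrm{span}\{w_1^\epsilon,\hdots,w_{k-1}^\epsilon\}$, so $\{w_1^\epsilon,\hdots,w_k^\epsilon\}$ is linearly independent. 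This closes the induction.

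An equivalent, non-constructive route is the one indicated in the remark preceding the statement: an $r$-tuple $(z_1,\hdots,z_r)$ is linearly dependent exactly when all $r\times r$ minors of the matrix $[\,z_1\ \hdots\ z_r\,]$ vanish; since $r\leq n$, these minors are polynomials on $(\CC^n)^r\cong\CC^{nr}$ that are not all identically zero, so the dependent tuples form a proper algebraic subset, hence a set of Lebesgue measure zero, whose complement is dense; one then picks a linearly independent tuple inside the polydisc of radius $\epsilon$ about $(w_1,\hdots,w_r)$. I do not expect any genuine obstacle here: the only point requiring (elementary) justification is that a proper linear subspace of $\CC^n$ — equivalently, the common zero locus of polynomials that do not all vanish identically — is nowhere dense, which is standard. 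The lemma will subsequently be used to manufacture, in the linear-algebra proof of Theorem \ref{thm:main}, perturbations that realize the orbit-closure inclusions furnished by Theorem \ref{thm:majorization}.
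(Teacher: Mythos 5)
Your proof is correct. The paper itself gives no argument at all: it states the lemma as ``a direct consequence of the fact that the set of linearly dependent $r$-tuples of vectors in $\CC^n$ is of measure zero in the set of all $r$-tuples,'' which is precisely the second route you sketch (the dependent tuples are the common zero locus of the $r\times r$ minors of $[\,z_1\ \hdots\ z_r\,]$, a proper algebraic subset of $\CC^{nr}$ since $r\leq n$, hence of measure zero with dense complement). Your first, inductive argument --- perturbing one vector at a time out of the span of the previously chosen ones, using only that a proper subspace of $\CC^n$ is nowhere dense --- is a genuinely self-contained and more elementary alternative that avoids invoking the measure-zero fact for algebraic sets; it buys a fully explicit justification at the cost of a few extra lines, whereas the paper's one-line appeal is shorter but leans on a standard fact left unproved. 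Either version suffices for the role the lemma plays in the first proof of Theorem \ref{thm:main}. (Incidentally, the range ``$i=1,\hdots,n$'' in the statement is a typo for ``$i=1,\hdots,r$,'' as your proof implicitly corrects.)
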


As noted in Remark \ref{rem-R}, any right singular block $L_k$ can be written as the sum of $k$ rank-$1$ pencils of the form $u_1(\la)v_1(\la)^T+\cdots+u_k(\la)v_k(\la)^T$, with $\deg u_1=\cdots=\deg u_k=0$ and $\deg v_1=\cdots=\deg v_k=1$. However, in the proof of Theorem \ref{thm:main} we need to write $L_k$ as a sum of rank-$1$ pencils $u(\la)v(\la)^T$ with some of the rows $v(\la)^T$ having degree zero instead. The following result shows that this can be done at a cost of using $k+1$ summands instead of $k$, and that we can set as many rows $v(\la)^T$ with degree zero as we want (up to $k+1$).

\begin{lemma}\label{lemma:L}
For each $j=0,1,\hdots,k+1$ we can decompose a right singular block $L_k$ as a sum of $k+1$ rank-$1$ vector polynomials with degree at most $1$
\begin{equation}\label{lk-decomp}
L_k=u_1(\la)v_1(\la)^T+\cdots+u_{k+1}(\la)v_{k+1}(\la)^T,
\end{equation}
where $u_i(\la)\in\CC[\la]^k,v_i(\la)\in\CC[\la]^{k+1}$, for $i=1,\hdots,k+1$, and $\deg u_1=\cdots=\deg u_j=\deg v_{j+1}=\cdots=\deg v_{k+1}=0$.
\end{lemma}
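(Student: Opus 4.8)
The plan is to produce the decomposition by hand, reading the top part of $L_k$ \emph{row by row} and the bottom part \emph{column by column}. Let $e_1,\ldots,e_k$ be the canonical basis of $\CC^k$ and $f_1,\ldots,f_{k+1}$ the canonical basis of $\CC^{k+1}$, with $e_0=e_{k+1}:=0$. Since the $i$th row of $L_k$ is $\la f_i^T+f_{i+1}^T$ and the $i$th column of $L_k$ is $e_{i-1}+\la e_i$ (for $i=1,\ldots,k+1$), we have the two extreme decompositions $L_k=\sum_{i=1}^{k}e_i(\la f_i+f_{i+1})^T$, into $k$ rank-$1$ pencils all with constant left factor (the one recalled in Remark~\ref{rem-R}), and $L_k=\sum_{i=1}^{k+1}(e_{i-1}+\la e_i)f_i^T$, into $k+1$ rank-$1$ pencils all with constant right factor, which is precisely the case $j=0$.

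For a general $j$ with $0\le j\le k$, I would write $L_k=M_{\mathrm{top}}+M_{\mathrm{bot}}$, where $M_{\mathrm{top}}$ is the pencil formed by rows $1,\ldots,j$ of $L_k$ (and zero rows below) and $M_{\mathrm{bot}}=L_k-M_{\mathrm{top}}$ is the pencil formed by rows $j+1,\ldots,k$ of $L_k$ (and zero rows above). Decomposing $M_{\mathrm{top}}$ row by row and $M_{\mathrm{bot}}$ column by column gives
$$
M_{\mathrm{top}}=\sum_{i=1}^{j}e_i\,(\la f_i+f_{i+1})^T,\qquad M_{\mathrm{bot}}=\sum_{i=j+1}^{k+1}c_i\,f_i^T,
$$
where $c_i\in\CC[\la]^{k}$ is the $i$th column of $L_k$ with the entries in rows $1,\ldots,j$ set to $0$ (so $c_{j+1}=\la e_{j+1}$ and $c_i=e_{i-1}+\la e_i$ for $j+1<i\le k$). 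This realizes $L_k$ as a sum of $j+(k+1-j)=k+1$ rank-$1$ pencils of degree at most $1$ whose left factors $u_1=e_1,\ldots,u_j=e_j$ are nonzero constant vectors and whose right factors $v_{j+1}=f_{j+1},\ldots,v_{k+1}=f_{k+1}$ are nonzero constant vectors, which is exactly the required $0/1$ degree pattern. For the remaining value $j=k+1$ (all left factors constant) I would take the first extreme decomposition $L_k=\sum_{i=1}^{k}e_i(\la f_i+f_{i+1})^T$ and append the zero summand $e_k\,0^T$, obtaining $k+1$ rank-$1$ pencils all with constant left factor.

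The only identity requiring verification is $M_{\mathrm{top}}+M_{\mathrm{bot}}=L_k$, which I would check row by row: $M_{\mathrm{top}}$ reproduces rows $1,\ldots,j$ and vanishes in rows $j+1,\ldots,k$, while $M_{\mathrm{bot}}$ vanishes in rows $1,\ldots,j$ and reproduces rows $j+1,\ldots,k$. The one point to watch is that the off-diagonal entry $1$ in position $(j,j+1)$ is carried by $M_{\mathrm{top}}$ (it is the $f_{j+1}^T$ tail of its $j$th row), which is why $c_{j+1}$ is $\la e_{j+1}$ rather than $e_j+\la e_{j+1}$. I expect no serious obstacle: the whole argument is a finite, fully explicit computation, and the only care needed is with the indexing of the boundary summands $i=j+1$ and $i=k+1$ and with the two extreme cases $j=0$ and $j=k+1$.
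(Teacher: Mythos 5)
Your decomposition is correct and is essentially the paper's own construction (its Cases 1 and 4): the whole rows $1,\dots,j$, the truncated column $\la e_{j+1}f_{j+1}^T$, and the remaining full columns --- which for $i\ge j+2$ coincide with your $c_i$, since those columns have no entries in rows $1,\dots,j$. The only cosmetic difference is at $j=k$, where your uniform formula yields a zero summand $0\cdot f_{k+1}^T$ while the paper's Case 3 instead splits row $k$ into the two nonzero pieces $\la e_k f_k^T$ and $e_k f_{k+1}^T$; both are acceptable.
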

\begin{proof} A decomposition as in the statement is not necessarily unique. We provide one such decomposition by considering the following four cases. Along the proof $e_i^{(k)}$ denotes the $i$th column of the $k\times k$ identity matrix.
\begin{itemize}
\item Case 1: $j=0$. Set $u_i(\la)={\rm Col}_i\,L_k$ and $v_i(\la)=e_i^{(k+1)}$, for $i=1,\hdots,k+1$.
\item Case 2: $j=k+1$. This is the case described in Remark \ref{rem-R}, where all column vectors $u_i(\la)$ have degree zero, and just $k$ nonzero summands are needed.
\item Case 3: $j=k$. Set
\begin{itemize}
\item $u_i(\la)=e_i^{(k)}$ and $v_i(\la)^T={\rm Row}_i L_k$, for $i=1,\hdots,k-1$,
\item $u_k(\la)=e_k^{(k)}$ and $v_k(\la)^T=\left(e_{k+1}^{(k+1)}\right)^T$,
\item $u_{k+1}(\la)=\la e_k^{(k)}$ and $v_{k+1}(\la)^T=\left(e_k^{(k+1)}\right)^T$.
\end{itemize}
\item Case 4: $1\leq j\leq k-1$. Set
\begin{itemize}
\item $u_i(\la)=e_i^{(k)}$ and $v_i(\la)^T={\rm Row}_i L_k$, for $i=1,\hdots,j$,
\item $u_{j+1}(\la)=\la e_{j+1}^{(k)}$ and $v_{j+1}(\la)^T=\left(e_{j+1}^{(k+1)}\right)^T$,
\item $u_{i}(\la)={\rm Col}_i\,L_k$ and $v_{i}(\la)^T=\left(e_i^{(k+1)}\right)^T$, for $i=j+2,\hdots,k+1$.
\end{itemize}
\end{itemize}
\end{proof}

The following result combines $m\times n$ and $n\times m$ matrix pencils by means of transposition. To avoid confusion, we introduce the notation ${\cal K}_a^{m\times n}$ to explicitly indicate the size of the matrix pencil ${\cal K}_a$ in Theorem \ref{thm:orbits}. The proof is straightforward from the majorization conditions in Theorem \ref{thm:majorization} and we omit it.

\begin{lemma}\label{lemma:transpose}
Let $Q(\la)$ be an $m\times n$ pencil with $\nr Q\leq r$. If $Q(\la)\in\overline{{\cal O}({\cal K}_{a}^{m\times n})}\subseteq \cP_r^{m\times n}$, then $Q(\la)^T\in\overline{{\cal O}({\cal K}_{r-a}^{n\times m})}\subseteq\cP_r^{n\times m}$.
\end{lemma}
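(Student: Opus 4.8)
The plan is to prove the symmetric statement that, for any two $m\times n$ pencils $P(\la)$ and $Q(\la)$, one has $\overline{{\cal O}(Q)}\subseteq\overline{{\cal O}(P)}$ if and only if $\overline{{\cal O}(Q^T)}\subseteq\overline{{\cal O}(P^T)}$, and then to apply this with $P(\la)={\cal K}_a^{m\times n}(\la)$ after checking that $({\cal K}_a^{m\times n})^T$ and ${\cal K}_{r-a}^{n\times m}$ have the same KCF.

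First I would record how the KCF behaves under transposition. If $E\,Q(\la)\,F$ is the KCF of $Q(\la)$ (with $E,F$ nonsingular), then $F^T Q(\la)^T E^T$ is its transpose; transposing block by block turns each right singular block $L_\varepsilon$ into a left singular block $L_\varepsilon^T$, each left singular block $L_\eta^T$ into a right singular block $L_\eta$, and each finite (resp.\ infinite) block into $J_k(\mu)^T+\la I_k$ (resp.\ $I_u+\la J_u(0)^T$), which is strictly equivalent to $J_k(\mu)+\la I_k$ (resp.\ $N_u$) via the reversal (order-reversing) permutation. Hence $\cR(Q^T)=\cL(Q)$, $\cL(Q^T)=\cR(Q)$, ${\cal J}_\mu(Q^T)={\cal J}_\mu(Q)$ for every $\mu\in\CC\cup\{\infty\}$, and $\nr Q^T=\nr Q$ since rank over the field $\CC(\la)$ is transposition-invariant; in particular $Q^T(\la)\in\cP_r^{n\times m}$ whenever $Q(\la)\in\cP_r^{m\times n}$. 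Substituting these identities into the three conditions of Theorem \ref{thm:majorization}, conditions (i) and (ii) for the pair $(P,Q)$ turn into conditions (ii) and (i) for the pair $(P^T,Q^T)$, while condition (iii) turns into its equivalent $\ell_0$-reformulation stated in Theorem \ref{thm:majorization}; this yields the claimed equivalence. (Alternatively, one may simply note that $M\mapsto M^T$ is a homeomorphism of $\CC^{2mn}$ onto $\CC^{2nm}$ that carries each strict-equivalence orbit onto a strict-equivalence orbit, hence carries orbit closures onto orbit closures.)

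It then remains to see that $({\cal K}_a^{m\times n})^T$ and ${\cal K}_{r-a}^{n\times m}$ have the same KCF. Transposing $({\cal K}_a^{m\times n})$ block by block produces $s$ copies of $L_{\alpha+1}^T$, $n-r-s$ copies of $L_\alpha^T$, $t$ copies of $L_{\beta+1}$, and $m-r-t$ copies of $L_\beta$, where $a=\alpha(n-r)+s$ and $r-a=\beta(m-r)+t$ are the Euclidean divisions appearing in Theorem \ref{thm:orbits}. On the other hand, the Euclidean divisions defining ${\cal K}_{r-a}^{n\times m}$, say $r-a=\alpha'(m-r)+s'$ and $a=\beta'(n-r)+t'$, combined with uniqueness of quotient and remainder, force $\beta'=\alpha$, $t'=s$, $\alpha'=\beta$, $s'=t$; substituting, ${\cal K}_{r-a}^{n\times m}$ is made of $t$ copies of $L_{\beta+1}$, $m-r-t$ copies of $L_\beta$, $s$ copies of $L_{\alpha+1}^T$, and $n-r-s$ copies of $L_\alpha^T$, i.e.\ exactly the blocks of $({\cal K}_a^{m\times n})^T$ up to permutation. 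Hence $\overline{{\cal O}(({\cal K}_a^{m\times n})^T)}=\overline{{\cal O}({\cal K}_{r-a}^{n\times m})}$, and applying the equivalence of the previous paragraph with $P={\cal K}_a^{m\times n}$ finishes the proof.

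The argument is entirely bookkeeping, so I do not anticipate a real obstacle. The only points that require a little care are the two already flagged: tracking that transposition exchanges the left/right index data while preserving the eigenvalue (Weyr) data, and carrying out the block-matching between $({\cal K}_a^{m\times n})^T$ and ${\cal K}_{r-a}^{n\times m}$ through the Euclidean divisions.
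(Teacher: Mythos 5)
Your proof is correct and follows exactly the route the paper intends: the authors omit the proof, stating only that it ``is straightforward from the majorization conditions in Theorem \ref{thm:majorization}'', and your argument fills in precisely those details (transposition swaps $\cR$ and $\cL$, preserves the Weyr data and the normal rank, and the Euclidean-division bookkeeping identifies $({\cal K}_a^{m\times n})^T$ with ${\cal K}_{r-a}^{n\times m}$ up to permutation of blocks). Your parenthetical alternative --- that $M\mapsto M^T$ is a homeomorphism carrying orbits to orbits, so it carries orbit closures to orbit closures --- is an equally valid and arguably cleaner shortcut.
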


\noindent{\bf First proof of Theorem \ref{thm:main}.} Let us first prove that $\Ca\subseteq\clok$. Note that Lemmas \ref{lemma:epseta} and \ref{lemma:inclusion} together imply that if $Q(\la)\in\Ca$ and $\nr Q=r$, then $Q(\la)\in\clok$. It remains to prove the inclusion for matrix pencils in $\Ca$ having normal rank smaller than $r$. So let $Q(\la)\in\Ca$ with $\nr Q<r$. Since $Q(\la)\in\Ca$, it can be written as
$$
Q(\la)=u_1(\la)v_1(\la)^T+\cdots+u_r(\la)v_r(\la)^T,
$$
with $\deg u_1=\hdots=\deg u_a=\deg v_{a+1}=\hdots=\deg v_r=0$. Then we can write
\begin{equation}\label{u}
u_i(\la)=\left\{\begin{array}{cc}u_{i0}\,,&1\leq i\leq a,\\u_{i0}+\la u_{i1}\,,&a+1\leq i\leq r\end{array}\right.,
\end{equation}
and
\begin{equation}\label{v}
  v_i(\la)=\left\{\begin{array}{cc}v_{i0}+\la v_{i1}\,,&1\leq i\leq a,\\v_{i0}\,,&a+1\leq i\leq r\end{array}\right..
\end{equation}
By Lemma \ref{lemma:pert}, for each $\epsilon>0$, there are $u_{10}^\epsilon,\hdots,u_{r0}^\epsilon\in\CC^m$, and $v_{10}^\epsilon,\hdots,v_{r0}^\epsilon\in\CC^n$ such that $\{u_{10}^\epsilon,\hdots,u_{r0}^\epsilon\}$ and $\{v_{10}^\epsilon,\hdots,v_{r0}^\epsilon\}$ are linearly independent and $\|u_{i0}-u_{i0}^\epsilon\|_2\leq\epsilon$, $\|v_{i0}-v_{i0}^\epsilon\|_2\leq\epsilon$, for $i=1,\hdots,r$. Set:
$$
u_i^\epsilon(\la)=\left\{\begin{array}{cc}u_{i0}^\epsilon\,,&1\leq i\leq a,\\u_{i0}^\epsilon+\la u_{i1}\,,&a+1\leq i\leq r\end{array}\right.
$$
and
$$
 v_i^\epsilon(\la)=\left\{\begin{array}{cc}v_{i0}^\epsilon+\la v_{i1}\,,&1\leq i\leq a,\\v_{i0}^\epsilon\,,&a+1\leq i\leq r\end{array}\right..
$$
Now we are going to see that:
\begin{itemize}
\item[(a)] Both $\{u_1^\epsilon(\la),\hdots,u_r^\epsilon(\la)\}$ and $\{v_1^\epsilon(\la),\hdots,v_r^\epsilon(\la)\}$ are linearly independent sets over $\CC(\la)$,

\item[(b)] $Q_\epsilon(\la):=u_1^\epsilon(\la)v_1^\epsilon(\la)^T+\cdots+u_r^\epsilon(\la)v_r^\epsilon(\la)^T$ has normal rank exactly $r$,

\item[(c)] $Q_\epsilon(\la)\in\Ca$, and

\item[(d)] $\|Q(\la)-Q_\epsilon(\la)\|_F\leq r\epsilon^2+\epsilon\, \alpha(Q)$,
\end{itemize}
where $\alpha(Q)$ is a quantity depending on $Q$, and does not depend on $\epsilon$.

Claim (a) is an immediate consequence of Lemma 2.6 in \cite{dd-kron}. For claim (b), just notice that $Q_\epsilon(\la)$ is the product:
$$
Q_\epsilon(\la)=\left[\begin{array}{ccc}u_1^\epsilon(\la)&\cdots&u_r^\epsilon(\la)\end{array}\right]\left[\begin{array}{ccc}v_1^\epsilon(\la)&\cdots&v_r^\epsilon(\la)\end{array}\right]^T
=U_\epsilon(\la)V_\epsilon(\la)^T,
$$
with $r\leq\min\{m,n\}$, and where both $U_\epsilon(\la)$ and $V_\epsilon(\la)$ have full column normal rank, by (a). Then the product $U_\epsilon(\la)V_\epsilon(\la)^T$ has full normal rank as well. Claim (c) is an immediate consequence of the definition of $\Ca$. To prove claim (d) we first note that
\begin{equation}\label{uv}
\begin{array}{lc}
u_i^\epsilon(\la)v_i^\epsilon(\la)^T-u_i(\la)v_i(\la)^T\\=\left\{
\begin{array}{cc}
\begin{array}{c}(u_{i0}^\epsilon-u_{i0})(v_{i0}^\epsilon-v_{i0})^T+u_{i0}(v_{i0}^\epsilon-v_{i0})^T+\\
(u_{i0}^\epsilon-u_{i0})v_{i0}^T+\la(u_{i0}^\epsilon-u_{i0})v_{i1}^T,
\end{array}&\mbox{$1\leq i\leq a$}\\
\begin{array}{c}(u_{i0}^\epsilon-u_{i0})(v_{i0}^\epsilon-v_{i0})^T+u_{i0}(v_{i0}^\epsilon-v_{i0})^T+\\
(u_{i0}^\epsilon-u_{i0})v_{i0}^T+\la u_{i1}(v_{i0}^\epsilon-v_{i0})^T,
\end{array}&\mbox{$a+1\leq i\leq r$.}
\end{array}
\right.
\end{array}
\end{equation}
Therefore,
$$
\begin{array}{cl}
\|Q(\la)-Q_\epsilon(\la)\|_F &\leq\displaystyle\sum_{i=1}^{a}\|u_i^\epsilon(\la)v_i^\epsilon(\la)^T-u_i(\la)v_i(\la)^T\|_F\\
&+\displaystyle\sum_{i=a+1}^r\|u_i^\epsilon(\la)v_i^\epsilon(\la)^T-u_i(\la)v_i(\la)^T\|_F\\
&\leq\displaystyle\sum_{i=1}^{a}(\epsilon^2+\epsilon\left(\|u_{i0}\|_2+\|v_{i0}\|_2+\|v_{i1}\|_2\right)\\
&+\displaystyle\sum_{i=a+1}^r(\epsilon^2+\epsilon\left(\|u_{i0}\|_2+\|v_{i0}\|_2+\|u_{i1}\|_2\right))\\
&=r\epsilon^2+\epsilon\displaystyle \left(\sum_{i=1}^a \|u_{i0}\|_2+\|v_{i0}\|_2+\|v_{i1}\|_2\right.\\
&+\displaystyle\left.\sum_{i=a+1}^r \|u_{i0}\|_2+\|v_{i0}\|_2+\|u_{i1}\|_2\right)=r\epsilon^2+\epsilon\, \alpha(Q),
\end{array}
$$
where the last inequality follows from \eqref{uv} and the basic inequality $\|\left[\begin{array}{cc}A&B\end{array}\right]\|_F\leq\|A\|_F+\|B\|_F$.

Now, from (b) and (c), and the result for pencils in $\Ca$ having normal rank exactly $r$, it follows that $Q_\epsilon(\la)\in\clok$. But, by (d), we have $\displaystyle\lim_{\epsilon\rightarrow0}Q_\epsilon(\la)=Q(\la)$ and, since $\clok$ is closed, we conclude that $Q(\la)\in\clok$, as wanted.

Now, we are going to prove the converse inclusion, namely that $\clok\subseteq\Ca$. So let $Q(\la)\in\clok$ with $\nr Q=\widetilde r\leq r$. We consider separately the following three cases.
\begin{itemize}
\item[(C1)] $\varepsilon(Q)=a$. In this case, and following \cite[Lemma 2.8]{dd-kron}, we can write
$$
Q(\la)=u_1(\la)v_1(\la)^T+\cdots+
u_{\widetilde r}(\la)v_{\widetilde r}(\la)^T,
$$
with $\deg u_1=\cdots=\deg u_a=\deg v_{a+1}=\cdots=\deg v_{\widetilde r}=0$, which shows that $Q(\la)\in\Ca$ (note that, if $\widetilde r<r$, we can add $r-\widetilde r$ summands with $u_{\widetilde r+1}(\la)\equiv\cdots\equiv u_r(\la)\equiv0$ and $v_{\widetilde r+1}(\la),\hdots,v_r(\la)$ being arbitrary constant nonzero vectors).

\item[(C2)] $\varepsilon(Q)>a$. In this case, it must be $\widetilde r<r$. To see this, note that $\widetilde r=r$ implies, by (i) in Theorem \ref{thm:majorization}, that $\cR({\cal K}_a)\geq\cR(Q)$, which in turn implies, by \eqref{sumesp}, that $\varepsilon(Q)\leq a$.

Since $\Ca$ is closed under strict equivalence, we may assume $Q(\la)$ given in KCF and, following Remark \ref{rem-R}, we can write:
\begin{equation}\label{q}
\begin{array}{cl}
Q(\la)=&u_1(\la)v_1(\la)^T+\cdots+u_a(\la)v_a(\la)^T\\
&+u_{a+1}(\la)v_{a+1}(\la)^T+\cdots+u_{\varepsilon(Q)}(\la)v_{\varepsilon(Q)}(\la)^T\\
&+u_{\varepsilon(Q)+1}(\la)v_{\varepsilon(Q)+1}(\la)^T+\cdots+u_{\widetilde r}(\la)v_{\widetilde r}(\la)^T,
\end{array}
\end{equation}
with $\deg u_1=\cdots=\deg u_{\varepsilon(Q)}=\deg v_{\varepsilon(Q)+1}=\cdots=\deg v_{\widetilde r}=0$. As in Remark \ref{rem-R}, the first $\varepsilon(Q)$ summands in the right hand side of \eqref{q} correspond to the right singular blocks, and, assuming the right singular blocks of $Q(\la)$ ordered in nondecreasing order, the sum $u_{a+1}(\la)v_{a+1}(\la)^T+\cdots+u_{\varepsilon(Q)}(\la)v_{\varepsilon(Q)}(\la)^T$ corresponds to right singular blocks with largest size. Let $\alpha_1\leq\hdots\leq \alpha_{n-\widetilde r}$ be the orders of the right singular blocks of $Q(\la)$ and let $\alpha$ be as in the statement of Theorem \ref{thm:orbits}. We distinguish the following two cases:
\begin{itemize}
\item[(C2.1)] $\alpha_{n-r+1}\geq \alpha+1$. In this case, by the majorization conditions for the inclusion of orbit closures in Theorem \ref{thm:majorization} we have
\begin{equation}\label{diff1}
\sum_{i=1}^{\alpha_{n-r+1}}r_i(Q)-\sum_{i=1}^{\alpha_{n-r+1}}r_i({\cal K}_a)\leq(r-\widetilde r)\alpha_{n-r+1}.
\end{equation}
 Note that we have removed the term $r_0(Q)+\widetilde r-(r_0({\cal K}_a)+r)$ appearing in the majorization condition, since this term is zero. This is because the sum of the normal rank of an $m\times n$ matrix pencil plus its number of right singular blocks is equal to $n$.

 By \eqref{sumesp} applied to both $Q(\la)$ and ${\cal K}_a(\la)$ we get that
 \begin{equation}\label{diff2}
 \hspace{-1.4cm}\sum_{i=1}^{\alpha_{n-r+1}}r_i(Q)-\sum_{i=1}^{\alpha_{n-r+1}}r_i({\cal K}_a)=\varepsilon(Q)-a-(r_{\alpha_{n-r+1}+1}(Q)+\cdots+r_{\alpha_{n-\widetilde r}}(Q)).
 \end{equation}
 Note that, given a list $\cS=(\beta_1,\hdots,\beta_s)$ of nonnegative integers and $0\leq\beta\leq\min\cS$, for each $i\geq \beta$, the identity $r_i(\cS)=r_{i-\beta}(\cS-\beta)$ holds, where $\cS-\beta:=\{\beta_1-\beta,\hdots,\beta_s-\beta\}$. Now, let us write
 $$
\hspace{-.8cm} \alpha_{n-r+1}+\cdots+\alpha_{n-\widetilde r}=(r-\widetilde r)\alpha_{n-r+1}+(\alpha_{n-r+2}-\alpha_{n-r+1})+\cdots+(\alpha_{n-\widetilde r}-\alpha_{n-r+1}).
 $$
The previous observation and Lemma \ref{lemma:sum} lead to
$$
\begin{array}{l}
(\alpha_{n-r+2}-\alpha_{n-r+1})+\cdots+(\alpha_{n-\widetilde r}-\alpha_{n-r+1})\\
=r_1(Q-\alpha_{n-r+1})+\cdots+r_{\alpha_{n-\widetilde r}-\alpha_{n-r+1}}(Q-\alpha_{n-r+1})
\\=r_{\alpha_{n-r+1}+1}(Q)+\cdots+r_{\alpha_{n-\widetilde r}}(Q),
\end{array}
$$
so the last two equations give
 \begin{equation}\label{diff3}
\hspace{-.8cm} (r-\widetilde r)\alpha_{n-r+1}+r_{\alpha_{n-r+1}+1}(Q)+\cdots+r_{\alpha_{n-\widetilde r}}(Q)=\alpha_{n-r+1}+\cdots+\alpha_{n-\widetilde r}\,.
 \end{equation}
Combining equations \eqref{diff1}--\eqref{diff3} we obtain
 \begin{equation}\label{bound}
 \alpha_{n-r+1}+\cdots+\alpha_{n-\widetilde r}\geq\varepsilon(Q)-a\,.
 \end{equation}
 Equation \eqref{bound} means that the largest $r-\widetilde r$ right singular blocks of $Q(\la)$ fill at least $\varepsilon(Q)-a$ rows in $Q(\la)$. In other words, as described in Remark \ref{rem-R}, in the $\varepsilon(Q)-a$ rows corresponding to the sum $u_{a+1}(\la)v_{a+1}(\la)^T+\cdots+u_{\varepsilon(Q)}(\la)v_{\varepsilon(Q)}(\la)^T$ in \eqref{q}, there are no more than $r-\widetilde r$ right singular blocks involved. As a consequence, equation \eqref{q} can be decomposed as
$$
\begin{array}{cl}
Q(\la)=&u_1(\la)v_1(\la)^T+\cdots+u_t(\la)v_t(\la)^T\\
&+u_{t+1}(\la)v_{t+1}(\la)^T+\cdots+u_a(\la)v_a(\la)^T+\cdots+u_{\varepsilon(Q)}(\la)v_{\varepsilon(Q)}(\la)^T\\
&+u_{\varepsilon(Q)+1}(\la)v_{\varepsilon(Q)+1}(\la)^T+\cdots+u_{\widetilde r}(\la)v_{\widetilde r}(\la)^T,
\end{array}
$$
where the summands in the second line correspond exactly to the $r-\widetilde r$ largest right singular blocks of $Q(\la)$, as explained in Remark \ref{rem-R}. Now, using Lemma \ref{lemma:L}, we can write the sum in the second line of the equation above as
 \begin{equation}\label{replacement}
 \begin{array}{c}
 u_{t+1}(\la)v_{t+1}(\la)^T+\cdots+u_a(\la)v_a(\la)^T+\cdots+u_{\varepsilon(Q)}(\la)v_{\varepsilon(Q)}(\la)^T\\
= \widetilde u_{t+1}(\la)\widetilde v_{t+1}(\la)^T+\cdots+\widetilde u_a(\la)\widetilde v_a(\la)^T+\cdots+\widetilde u_{\varepsilon(Q)}(\la)\widetilde v_{\varepsilon(Q)}(\la)^T\\
+ \widehat u_1(\la)\widehat v_1(\la)^T+\cdots+\widehat u_{r-\widetilde r}(\la)\widehat v_{r-\widetilde r}(\la)^T,
 \end{array}
 \end{equation}
 with $\deg \widetilde v_{a+1}=\cdots=\deg\widetilde v_{\varepsilon(Q)}=\deg\widehat v_1=\cdots=\deg\widehat v_{r-\widetilde r}=0$. Replacing this expression into \eqref{q} we arrive to an expression like the one in the definition of $\Ca$ in Lemma \ref{lemma:tech}, so $Q(\la)\in\Ca$.

\item[(C2.2)] $\alpha_{n-r+1}<\alpha+1$. In this case,
$$
\alpha_1+\cdots+\alpha_{n-r}\leq(n-r)\alpha_{n-r+1}\leq(n-r)\alpha\leq a.
$$
Hence, there are at least $n-r$ different right singular blocks in the first $a$ rows of $Q(\la)$. Since the total number of right singular blocks in $Q(\la)$ is $n-\widetilde r$, there cannot be more than $r-\widetilde r$ right singular blocks involved in the following $\varepsilon(Q)-a$ rows. Again, we can write \eqref{replacement} and replace this sum into \eqref{q} to conclude that $Q(\la)\in\Ca$.

\end{itemize}

\item[(C3)] $\varepsilon(Q)<a$. We assume $Q(\la)$ being in KCF, as in case (C2), and we consider separately the following cases:
\begin{itemize}
\item[(C3.1)] $\eta(Q)\leq r-a$. In this case, there is a decomposition of the form \eqref{qdecomp} for $Q(\la)$, where $\deg u_1=\cdots=\deg u_{\varepsilon(Q)}=\deg\widetilde v_1=\cdots=\deg\widetilde v_{\eta(Q)}=0$ and $s=\widetilde r-\varepsilon(Q)-\eta(Q)$. If $\widetilde r<r$, we can also set $\widehat u_{\widetilde r-\varepsilon(Q)-\eta(Q)+1}(\la)\equiv\cdots\equiv\widehat u_{r-\varepsilon(Q)-\eta(Q)}(\la)\equiv0$ and $\widehat v_{\widetilde r-\varepsilon(Q)-\eta(Q)+1}(\la)\equiv\cdots\equiv\widehat v_{r-\varepsilon(Q)-\eta(Q)}(\la)\equiv0$, in order to have $r$ summands in \eqref{qdecomp} instead of $\widetilde r$. Moreover, as mentioned in Remark \ref{rem-R}, claim (c), we can choose $\widehat u_i(\la)$ and $\widehat v_i(\la)$ with either $\deg \widehat u_i=0$ or $\deg\widehat v_i=0$, for each $i=1,\hdots,r-\varepsilon(Q)-\eta(Q)$. Then, since $r-\varepsilon(Q)-\eta(Q)=(a-\varepsilon(Q))+(r-a-\eta(Q))$, we can chose $a-\varepsilon(Q)$ vectors $\widehat u_i(\la)$ with degree zero (for instance, $\deg \widehat u_1=\cdots=\deg \widehat u_{a-\varepsilon(Q)}=0$) and $r-a-\eta(Q)$ vectors $\widehat v_j(\la)$ with degree zero (after the previous choice it would be $\deg \widehat v_{a-\varepsilon(Q)+1}=\cdots=\deg\widehat v_{r-\varepsilon(Q)-\eta(Q)}=0$). This gives a decomposition of $Q(\la)$ in $\Ca$.

\item[(C3.2)] $\eta(Q)>r-a$. This case can be reduced to (C2) by considering $Q(\la)^T$ instead of $Q(\la)$. To be precise, we have:
\begin{itemize}
\item[(i)] $\varepsilon(Q^T)=\eta(Q)>r-a$.
\item[(ii)] Since $Q(\la)\in\overline{{\cal O}({\cal K}_{a}^{m\times n}})$, then $Q(\la)^T\in\overline{{\cal O}({\cal K}_{r-a}^{n\times m}})$, by Lemma \ref{lemma:transpose}.
\end{itemize}
Then, (i) and (ii), together with case (C2) imply that $Q(\la)^T\in{\cal C}^r_{r-a}\subseteq{\cP}_{r-a}^{n\times m}$, and this in turn implies that $Q(\la)\in\Ca\subseteq{\cP}_r^{m\times n}$.
\end{itemize}
\end{itemize}
\hfill$\square$

Since ${\cal K}_a(\la)=\diag\left({\cal K}_a^{(1)},{\cal K}_a^{(2)}\right)$, with ${\cal K}_a^{(1)}$ having $a$ rows and ${\cal K}_a^{(2)}$ having $r-a$ columns, it is natural to wonder whether any pencil $Q(\la)\in\clok$ is strictly equivalent to a pencil of the form $\diag(Q_1,Q_2)$, with $Q_1$ having $a$ rows and $Q_2$ having $r-a$ columns. The following example shows that this is not true. This example also illustrates the construction in Lemma \ref{lemma:L}.

\begin{Ex}
Let us consider the $6\times6$ pencils $K(\la)$ and $\widetilde K(\la)$ in \eqref{kktilde}. Note that $K(\la)={\cal K}_2(\la)$ in Theorem {\rm\ref{thm:orbits}} if we set $r=5$. As mentioned in Section {\rm\ref{sec:intro}}, $\widetilde K(\la)\in\overline{{\cal O}(K)}=\overline{{\cal O}({\cal K}_2)}$, as can be easily checked by Theorem {\rm\ref{thm:majorization}}. Using the decomposition shown in the proof of Lemma {\rm \ref{lemma:L}}, this can also be seen by writing:
$$
\begin{array}{ccl}
\widetilde K(\la)=\diag(L_1,L_3)&=&e_1^{(6)}\left(\begin{array}{cccccc}\la&1&0&0&0&0\end{array}\right)\\
&&+e_2^{(6)}\left(\begin{array}{cccccc}0&0&\la&1&0&0\end{array}\right)\\
&&+\la e_3^{(6)}(e_4^{(6)})^T\\
&&+\left(\begin{array}{c}0\\0\\1\\\la\\0\\0\end{array}\right)(e_5^{(6)})^T\\
&&+ e_4^{(6)}(e_6^{(6)})^T,
\end{array}
$$
which shows that $\widetilde K(\la)\in{\cal C}_2^5=\overline{{\cal O}({\cal K}_2)}$. We note that $\widetilde K(\la)$ can not be written as $\widetilde K(\la)=\diag(\widetilde K^{(1)},\widetilde K^{(2)})$, with $\widetilde K^{(1)}$ having $a=2$ rows and $\widetilde K^{(2)}$ having $r-a=3$ columns. If such a decomposition exists, then the right singular blocks in KCF$(Q)$ would be the union of the right singular blocks of KCF$(\widetilde K_1)$ and KCF$(\widetilde K_2)$, so there would not be an $L_3$ block in KCF$(\widetilde K)$. Note, however, that $\widetilde K(\la)\not\in{\cal C}_2^4$ (by Lemma {\rm\ref{lemma:epseta} (i)}), despite $\nr \widetilde K=4$.
\end{Ex}

\section{The proof of Theorem \ref{thm:main} via algebraic geometry}\label{sec:ag}

The linear algebra proof proceeded by first showing that those pencils in $\Ca$ with normal rank exactly $r$ belong to $\overline{{\cal O}({\cal K}_a)}$, then
that all pencils of $\Ca$ are in $\clok$,
and finally that $\clok\subseteq\Ca$. The last two assertions were cumbersome to prove because they involved checking several
cases and one needed to argue with limits.

As mentioned above, in our situation one obtains the same closure via taking limits as taking the {\it Zariski closure}:
the Zariski closure of a set $X\subset \BC^N$    is the common
zero set of the space of all polynomials on $\BC^N$
that vanish on all points of $X$. (In general, the Zariski closure always contains the closure
obtained by taking limits.)

In algebraic geometry, it is often convenient to work in projective space
$\BC\BP^N$ which is the set of all lines through the origin in $\BC^{N+1}$ or equivalently
$(\BC^{N+1}\backslash 0)/\sim$ where $v\sim w$ if $v=\lambda w$ for some $\lambda\in \BC\backslash 0$.
Let $\pi : \BC^{N+1}\backslash 0\ra \BC\BP^N$ denote the projection map.
This is especially convenient when the sets of interest are invariant under rescaling,
as will be  our case. A {\em projective variety} $X\subset \BC\BP^N$ is the image under  $\pi$  of the common zero set
of a collection of  homogeneous polynomials on $\BC^{N+1}$. In particular,  a projective variety
is Zariski closed by definition. It is {\it irreducible} if it cannot be nontrivially written as the union
of two projective varieties. A subset $X\subseteq\BC\BP^N$ is Zariski closed and irreducible if and only if  $\pi^{-1}(X)\cup 0\subseteq\BC^{N+1}$ is Zariski closed and irreducible.

The following proof of Theorem \ref{thm:main} avoids the above-mentioned  difficulties by first exhibiting $\Ca$ as the image of a map whose image
is Zariski closed and invariant under multiplication by the groups of invertible $n\times n$ and $m\times m$ matrices, respectfully denoted
$GL_n$ and $GL_m$. (In the language of algebraic geometry, $\Ca$ is exhibited as a $(GL_n\times GL_m)$-variety.) Then, since
we have already seen that ${\cal K}_a(\la)$ belongs to $\Ca$ (see Remark \ref{rem-ka}), its orbit closure must belong as well. Finally,  a simple upper bound on the dimension of $\Ca$ and the
observation that $\Ca$ is irreducible, shows they coincide.

Write $(\CC^m)^p$ to denote the cartesian product of $\CC^m$ with itself $p$ times, and $V\ot W$ denotes the tensor product of the vector spaces $V$ and $W$.
Let $a\in \{ 0,1\hd r\}$.
 It will be convenient to use double indices to denote elements of
$\BC^m$ and $\BC^n$: we write $u_{\mu, \ep}\in \BC^m$ and $v_{\mu,\ep}\in \BC^n$, where $\ep\in \{0,1\}$.
Define
$$
f_a:(\BC^m)^{2r-a}\times (\BC^n)^{r+a}  \ra \BC^2\ot \BC^m\ot \BC^n
$$
by
\begin{align*}
&(u_{1, 0}\hd u_{r, 0},u_{a+1,1}\hd u_{r, 1})\times
(v_{1, 0}\hd v_{r,0},v_{1,1}\hd v_{a,1})
 \mapsto \\
&
e_1\otimes[u_{1,0}\otimes v_{1,0}+\cdots+u_{r,0}\otimes v_{r,0}]\\
&
+e_2\otimes[u_{1,0}\otimes v_{1,1}+\cdots+u_{a,0}\otimes v_{a,1}
 +u_{a+1,1}\otimes v_{a+1,0}+\cdots+u_{r,1}\otimes v_{r,0}].
\end{align*}

Recall that $\BC^m\ot \BC^n$ may be identified with the space of $m\times n$ matrices. Define a map
$$
mat: \BC^2\ot \BC^m\ot \BC^n  \ra \BC[\lambda]^{ m\times n}
$$
by sending $e_1\mapsto 1$ and $e_2\mapsto \lambda$.
Then $mat$ applied to the image of $f_a$ is exactly $\Ca$.

The proof that $\Ca$ is Zariski closed and irreducible follows completely standard arguments. For the convenience of
the reader we present them here.
Write $\bold u= (u_{1, 0}\hd u_{r, 0},u_{a+1,1}\hd u_{r, 1})$ and $\bold v= (v_{1, 0}\hd v_{r,0},v_{1,1}\hd v_{a,1})$.
Note that $f_a(\lambda \bold u, \mu\bold v)=\lambda\mu f_a(\bold u,\bold v)$, for $\lambda,\mu\in \BC\backslash 0$, so $f_a$  descends to a map
$$
 \begin{array}{cccc}
pf_a: \BC\BP^{m(2r-a)-1}\times\BC\BP^{n(r+a)-1}&\longrightarrow&\Proj(\CC^2\otimes\CC^m\otimes\CC^n) .
 \end{array}
$$
In coordinates, $f_a$ and $pf_a$ are given by the same homogeneous quadratic polynomials.
(To see this, let $a_{\sigma}$ be a basis of $\BC^m$ and $b_{\tau}$ a basis of $\BC^n$.
Write $u_{i,\ep}=\sum_\sigma u_{i,\ep,\sigma}a_{\sigma}$ and similarly for $v_{j,\ep}$.
Then the coefficient of, e.g., $e_1\ot a_{\sigma}\ot b_{\tau}$ in the image is
$\sum_{i,j=1}^r u_{i,0,\sigma}v_{j,0,\tau}$.)
More precisely, the polynomials are
linear on each projective space. In particular, the map $pf_a$
is a {\it regular map}. (A regular map from a product of projective spaces $\BP^A\times \BP^B$ to a projective space is
one defined by polynomials that are homogeneous on each   space in the product, and such that the only common zeros of these polynomials in $\BC^{A+1}\times \BC^{B+1}$ are of the form
$(0,y)$ or $(x,0)$, where $x\in \BC^{A+1}$ and $y\in  \BC^{B+1}$.) The product of projective spaces
is an irreducible projective variety (see, e.g., \cite[\S I.5.1]{shafarevich}).

Now we use two standard facts: If $X$ is an irreducible projective variety and $f: X\ra \BC\BP^N$ is a regular
map, then the image is irreducible and closed.
To see the first, note that if $f(X)=Y_1\cup Y_2$, with $Y_j$ varieties, then $X=f\inv(Y_1)\cup f\inv(Y_2)$, a contradiction.
That the image is closed is more difficult to prove, see, e.g. \cite[\S I.5.2, Thm. 2]{shafarevich}.

The above remarks prove that ${\rm Im}(pf_a)$ is Zariski closed and irreducible. Since ${\rm Im}(f_a)=\pi^{-1}({\rm Im}(pf_a))\cup0$, this implies that ${\rm Im}(f_a)$ is Zariski closed and irreducible, which in turn implies that  the set $\Ca$ is Zariski closed and irreducible. This, together with \eqref{decomp} proves that $\Ca$, for $a=0,1,\hdots,r$, are the irreducible components of $\cP_r^{m\times n}$, which is part (b) of Theorem \ref{thm:main}.

Part (a) follows from part (b), together with Theorem \ref{thm:orbits} and the uniqueness of the irreducible components. However, we can give an alternative proof, without using Theorem \ref{thm:orbits}, as follows. As mentioned above, ${\cal K}_a\in \Ca$ implies that ${\cal O}({\cal K}_a)\subseteq\Ca$ and, since $\Ca$ is Zariski closed, this in turn implies $\clok\subseteq\Ca$. The fact that $\Ca$ is
irreducible and of dimension at most
$\tdim  \clok$, will then show $\clok=\Ca$.

It remains to prove the dimension estimate. This can be done directly by computing the rank of the differential
of $f_a$ at a general point, but can easily be seen by the following argument:

Using \eqref{u} and \eqref{v} we can write any pencil $Q(\la)\in\Ca$ as
$$
Q(\la)=\sum_{i=0}^{r}u_{i,0}\ot v_{i,0}+\lambda\left(\sum_{j=0}^au_{j,0}\ot v_{j,1}+\sum_{k=a+1}^ru_{k,1}\ot v_{k,0}\right).
$$
The trailing coefficient is an arbitrary $m\times n$ matrix with rank at most $r$. The set of $m\times n$ matrices with rank at most $r$ is an algebraic set of dimension $r(m+n-r)$. The leading coefficient introduces $an+(r-a)m$ new parameters. As a consequence, the dimension of $\Ca$ is  at most  the sum of these two quantities, namely $r(2m+n-r)+a(n-m)$.
But by \cite[Th. 3.3]{dd-orbits}, $\tdim \clok=r(2m+n-r)+a(n-m)$, and since $\clok\subseteq \Ca$, equality must hold.

The proof is complete.
\hfill$\square$

\section{Conclusions}\label{sec:conclusion}

We have presented a new description of the irreducible components of the set of $m\times n$ matrix pencils with normal rank at most $r$, which covers all situations where matrix pencils are singular, namely $r\leq\min\{m,n\}$ if $m\neq n$, and $r\leq n-1$ if $m=n$. This new description is constructible in the sense that it depends on a finite number of parameters which are combined to get a sum of $r$ rank-$1$ pencils $u(\la)v(\la)^T$, in such a way that one of $u(\la)$ or $v(\la)$ has degree zero. Unlike the previously known description of these irreducible components, this new one does not require the knowledge of the Kronecker canonical form in order to determine whether a given $m\times n$ pencil of normal rank at most $r$ belongs to a certain component or not.




\begin{thebibliography}{99}

\bibitem{bongartz} K. Bongartz.
\newblock On degenerations and extensions of finite dimensional modules.
\newblock {\em Adv. Math.}, 121 (1996) 245--287.

\bibitem{dehoyos} I. De Hoyos.
\newblock Points of continuity of the Kronecker canonical form.
\newblock{\em SIAM J. Matrix Anal. Appl.}, 11 (1990) 278--300.

\bibitem{de} J. W. Demmel and A. Edelman.
\newblock The dimension of matrices (matrix pencils) with given Jordan (Kronecker) canonical form.
\newblock {\em Linear Algebra Appl.}, 230 (1995) 61--87.

\bibitem{dd-kron} F. De Ter\' an and F. M. Dopico.
\newblock Low rank perturbation of Kronecker structures without full rank.
\newblock {\em SIAM J. Matrix Anal. Appl}., 29 (2007) 496--529.

\bibitem{dd-orbits} F. De Ter\' an and F. M. Dopico.
\newblock A note on generic Kronecker orbits of matrix pencils with fixed rank.
\newblock {\em SIAM J. Matrix Anal. Appl}., 30 (2008) 491--496.

\bibitem{dd-addendum} F. De Ter\' an and F. M. Dopico.
\newblock Generic change of the partial multiplicities of regular matrix pencils under low-rank perturbations.
\newblock To appear in {\em SIAM J. Matrix Anal. Appl}.

\bibitem{ddm-weier} F. De Ter\'an, F. M. Dopico, and J. Moro.
\newblock Low rank perturbation of Weierstrass structure.
\newblock {\em SIAM J. Matrix Anal. Appl.}, 30 (2008) 538--547.

\bibitem{dk14} A. Dmytryshyn and B. K\aa gstr\"{o}m.
\newblock Orbit closure hierarchies of skew-symmetric matrix pencils.
\newblock {\em SIAM J. Matrix Anal. Appl.,} 35 (2014) 1429-1443.

\bibitem{dk-preprint} A. Dmytryshyn.
\newblock {\em Structure preserving stratification of skew-symmetric matrix polynomials}.
\newblock Technical report UMINF 15.16, Department of Computing Science, Ume\aa\ University, Sweden, 2015

\bibitem{eek1}  A. Edelman, E. Elmroth, and B. K\aa gstr\"{o}m.
\newblock A geometric approach to perturbation theory
of matrices and matrix pencils. Part I: Versal deformations.
\newblock {\em SIAM J. Matrix Anal. Appl.}, 18 (1997) 653--692.

\bibitem{eek2} A. Edelman, E. Elmroth, and B. K\aa gstr\"{o}m.
\newblock A geometric approach to perturbation theory
of matrices and matrix pencils. Part II: A stratification-enhanced staircase algorithm.
\newblock {\em SIAM J. Matrix Anal. Appl}., 20 (1999) 667--699.

\bibitem{gant} F.\ R.\ Gantmacher.
\newblock {\em The Theory of Matrices}.
\newblock Chelsea, New York, 1959.

\bibitem{hinrichsen} D. Hinrichsen and J. O'Halloran.
\newblock Orbit closures of singular matrix pencils.
\newblock{\em J. Pure Appl. Algebra}, 81 (1992) 117--137.

\bibitem{stratigraph} P. Johansson.
\newblock {\em Matrix Canonical Structure Toolbox.}
\newblock Technical report UMINF 06.15, Department of Computing Science, Ume\aa\ University, Sweden, 2006.

\bibitem{mumford} D. Mumford.
\newblock {\em Algebraic Geometry (I)}.
\newblock Springer-Verlag, Berlin, 1995.

\bibitem{pokr} A. Pokrzywa.
\newblock On perturbations and the equivalence orbit of a matrix pencil.
\newblock{\em Linear Algebra Appl.}, 82 (1986) 99--121.

\bibitem{shafarevich} I. R. Shafarevich.
\newblock {\em Basic algebraic geometry.} 1, 2nd. ed.
\newblock Springer-Verlag, Berlin, 1994.

\bibitem{vd79} P. Van Dooren.
\newblock The Computation of Kronecker's canonical form of a singular pencil.
\newblock {\em Linear Algebra Appl.}, 27 (1979) 103--140.

\bibitem{waterhouse} W. C. Waterhouse.
\newblock The codimension of singular matrix pairs.
\newblock{\em Linear Algebra Appl.}, 57 (1984) 227--245.

\end{thebibliography}
\end{document}